\documentclass{amsart}
\usepackage{amssymb,amsthm, amsmath, amsfonts}
\usepackage{graphics}
\usepackage{hyperref}
\usepackage[all]{xy}
\usepackage{enumerate}
\usepackage{mathrsfs}

\setlength{\topmargin}{-.25in}
\setlength{\textheight}{9.25in}
\setlength{\oddsidemargin}{0.0in}
\setlength{\evensidemargin}{0.0in}
\setlength{\textwidth}{6.5in}

\theoremstyle{plain}
\newtheorem{theorem}{Theorem}[section]
\newtheorem{lemma}[theorem]{Lemma}
\newtheorem{proposition}[theorem]{Proposition}
\newtheorem{corollary}[theorem]{Corollary}
\numberwithin{equation}{section}

\theoremstyle{definition}

\newtheorem{definition}[theorem]{Definition}
\newtheorem{example}[theorem]{Example}
\newtheorem{remark}[theorem]{Remark}

\DeclareMathOperator*{\colim}{colim}
\DeclareMathOperator{\End}{End}
\DeclareMathOperator{\Hom}{Hom}
\DeclareMathOperator{\Ob}{Ob}
\DeclareMathOperator{\im}{Im}
\DeclareMathOperator{\op}{op}
\DeclareMathOperator{\prd}{prd}

\newcommand{\Z}{{\mathbb{Z}}}
\newcommand{\cA}{{\mathcal{A}}}

\newcommand{\cC}{{\mathcal{C}}}

\newcommand{\FI}{{\mathcal{FI}}}
\newcommand{\FS}{{\mathcal{FS}}}
\newcommand{\OI}{{\mathcal{OI}}}

\newcommand{\VI}{{\mathcal{VI}}}

\newcommand{\wA}{{\widetilde{\mathcal{A}}}}
\newcommand{\wI}{{\widetilde{I}}}
\newcommand{\wgamma}{{\widetilde{\gamma}}}
\newcommand{\wdelta}{{\widetilde{\delta}}}

\title{On central stability}

\author{Wee Liang Gan}
\address{Department of Mathematics, University of California, Riverside, CA 92521, USA}
\email{wlgan@math.ucr.edu}

\author{Liping Li}
\address{Key Laboratory of Performance Computing and Stochastic Information Processing (Ministry of Education), College of Mathematics and Computer Science, Hunan Normal University, Changsha, Hunan 410081, China}
\email{lipingli@hunnu.edu.cn}

\thanks{The second author is supported by the National Natural Science Foundation of China 11541002, the Construct Program of the Key Discipline in Hunan Province, and the Start-Up Funds of Hunan Normal University 830122-0037. Both authors would like to thank the anonymous referee for carefully checking the manuscript and providing quite a few valuable comments.}

\keywords{Central stability, representation stability, FI-modules, finitely presented.}
\subjclass[2010]{16P40; 16G99.}

\begin{document}

\begin{abstract}
The notion of central stability was first formulated for sequences of representations of the symmetric groups by Putman. A categorical reformulation was subsequently given by Church, Ellenberg, Farb, and Nagpal using the notion of FI-modules, where FI is the category of finite sets and injective maps. We extend the notion of central stability from FI to a wide class of categories, and prove that a module is presented in finite degrees if and only if it is centrally stable. We also introduce the notion of $d$-step central stability, and prove that if the ideal of relations of a category is generated in degrees at most $d$, then every module presented in finite degrees is $d$-step centrally stable.
\end{abstract}

\maketitle

\section{Introduction}

Let $R$ be a commutative ring. For any small category $\cC$, a \emph{$\cC$-module} over $R$ is, by definition,  a (covariant) functor from $\cC$ to the category of $R$-modules. A \emph{morphism} of $\cC$-modules over $R$ is, by definition, a natural transformation of functors. The category of $\cC$-modules over $R$ is an abelian category. One can define, in a natural way, the notion of a \emph{finitely generated} $\cC$-module over $R$.

Let $\FI$ be the category of finite sets and injective maps, and denote by $\Z_+$ the set of non-negative integers. For any $\FI$-module $V$ over $R$ and $n\in\Z_+$, we write $V_n$ for $V([n])$, where $[n]:=\{1,\ldots,n\}$. Since the automorphism group of $[n]$ in the category $\FI$ is the symmetric group $S_n$, an $\FI$-module $V$ over $R$ gives rise to a sequence $\{V_n\}$ where $V_n$ is a representation of $S_n$. It was shown by Church, Ellenberg, Farb, and Nagpal (see \cite{CEF} and \cite{CEFN}) that many interesting sequences of representations of symmetric groups arise in this way from finitely generated $\FI$-modules. A principle  result they proved is that any finitely generated $\FI$-module over a commutative noetherian ring is noetherian. They deduced as a consequence that if $V$ is a finitely generated $\FI$-module over a commutative noetherian ring, then the sequence $\{V_n\}$ admits an inductive description, in the sense that for all sufficiently large integer $N$, one has
\begin{equation} \label{colimit}
 V_n \cong \colim_{\substack{S\subset [n] \\ |S|\leqslant N}} V(S) \quad \mbox{ for each } n\in \Z_+,
\end{equation}
where the colimit is taken over the poset of all subsets $S$ of $[n]$ such that $|S|\leqslant N$.

To provide the context for our present article, let us briefly describe the way in which \eqref{colimit} is proved in \cite{CEFN}. Suppose that $V$ is a finitely generated $\FI$-module over a noetherian ring. For any finite set $T$, there is a canonical homomorphism of $R$-modules
\begin{equation} \label{canonical map}
\colim_{\substack{S\subset T \\ |S|\leqslant N}} V(S) \longrightarrow V(T).
\end{equation}
Since $V$ is finitely generated, there exists an integer $N'$ such that $V$ is generated by $\sqcup_{i\leqslant N'} V_i$. It is easy to see that the homomorphism \eqref{canonical map}  is surjective if $N\geqslant N'$. To prove the injectivity of \eqref{canonical map} when $N$ is sufficiently large, Church, Ellenberg, Farb, and Nagpal constructed a (Koszul) complex $\widetilde{S}_{-*}V$ of $\FI$-modules which has the property that:
\begin{gather*}
(H_1(\widetilde{S}_{-*} V))(T) = \mathrm{Ker}\Big( \colim_{\substack{S\subset T \\ |T|-2\leqslant |S|\leqslant|T|-1 }} V(S) \to V(T)  \Big).
\end{gather*}
They verified that one has:
\begin{equation}  \label{central stabilization on lhs}
\colim_{\substack{S\subset T \\ |T|-2\leqslant |S|\leqslant|T|-1 }} V(S) = \colim_{\substack{S\subset T \\  |S|<|T| }} V(S).
\end{equation}
From the noetherian property of $V$, they proved that there exists an integer $N''$ such that $(H_1(\widetilde{S}_{-*} V))(T)=0$ if $|T|\geqslant N''$. As a consequence,
\begin{equation} \label{third isomorphism}
\colim_{\substack{S\subset T \\ |S|< |T| }} V(S) = V(T) \quad \mbox{ if } |T| > \max\{N',N''\}.
\end{equation}
Set $N=\max\{N',N''\}$. The homomorphism in \eqref{canonical map} is an isomorphism if $|T|\leqslant N$, for $T$ is terminal in the poset $\{S \mid S\subset T\}$. It now follows by an induction on $|T|$ that \eqref{canonical map} is an isomorphism too if $|T|>N$, for:
\begin{equation*}
\colim_{\substack{S\subset T \\ |S|\leqslant N}} V(S)= \colim_{\substack{U\subset T \\ |U|<|T| } } \, \colim_{\substack{S\subset U \\ |S|\leqslant N} } V(S) = \colim_{\substack{U\subset T \\ |U|<|T|} } V(U) = V(T),
\end{equation*}
where the first isomorphism is routine, the second isomorphism is by the induction hypothesis, and the third isomorphism is by \eqref{third isomorphism}.

It was subsequently proved by Church and Ellenberg that for any $\FI$-module $V$ over an arbitrary commutative ring, if $V$ is presented in finite degrees (in a suitable sense), then there are integers $N'$ and $N''$ such that $V$ is generated by $\sqcup_{i\leqslant N'} V_i$, and $(H_1(\widetilde{S}_{-*} V))(T)=0$ if $|T|\geqslant N''$. Consequently, using the same arguments as above, they showed that the isomorphism \eqref{colimit} holds for all $N$ sufficiently large. This extends the result of their joint work with Farb and Nagpal to $\FI$-modules which are not necessarily noetherian (since every finitely generated $\FI$-module over a commutative noetherian ring is presented in finite degrees).

The left-hand side of \eqref{central stabilization on lhs} is isomorphic to the central stabilization construction of Putman \cite[\S1]{Putman}. In their joint work, Putman and Sam generalized the isomorphism in \eqref{colimit} to modules over complemented categories with a generator of which $\FI$ is an example. A complemented category with a generator $X$ is the data of a symmetric monoidal category and an object $X$ satisfying a list of axioms. Their proof is similar to the one for $\FI$ described above.

A goal of our present paper is to give a very simple and transparent proof of a generalization of the above results. In particular, our proof does not require consideration of the complex $\widetilde{S}_{-*}V$ or similar complexes. Moreover, the setting for our generalization is much simpler than the one of complemented categories with a generator used by Putman and Sam \cite{PS}, and include many more examples; our generalization is, in fact, motivated by the fact that several combinatorial categories studied by Sam and Snowden in \cite{SS-Grobner} do not fall within the framework of \cite{PS}. Another goal of our paper is to explain the role of the quadratic property of $\FI$ in the isomorphism (\ref{central stabilization on lhs}).

\subsection*{Outline of the paper}
This paper is organized as follows.

In Section \ref{generalities}, we define our generalization of the notion of central stability and introduce the notion of $d$-step central stability. We show that in the special case of complemented categories with a generator studied by Putman and Sam, our notion of central stability is indeed equivalent to their notion of central stability. We also show that for the category $\FI$, it is equivalent to the inductive description \eqref{colimit}.

In Section \ref{central stability}, we give a reminder of a key lemma from Morita theory. We then prove our first main result, that a module is presented in finite degrees if and only if it is centrally stable. We deduce that if every finitely generated module is neotherian, then every finitely generated module is centrally stable. We prove that the converse of the preceding statement holds under a local finiteness assumption on the category when the base ring is a commutative noetherian ring. To illustrate the wide applicability of our results, we recall examples of combinatorial categories introduced by Sam and Snowden in \cite{SS-Grobner} which fall within our framework but are not complemented categories with a generator.

In Section \ref{d-step section}, we prove our second main result, that if the ideal of relations of a category is generated in degrees at most $d$, then every module presented in finite degrees is $d$-step centrally stable; for example, if the category is quadratic, then every module presented in finite degrees is $2$-step centrally stable.

To apply our second main result, one needs to have a practical way to check that the ideal of relations of a category is generated in degrees at most $d$. In Section \ref{last section}, we give sufficiency conditions which allow one to do this.

\section{Generalities} \label{generalities}

\subsection{Notations and definitions} \label{definitions subsection}
We prefer to formulate our main results in the more familiar language of modules over algebras. Throughout this paper, we denote by $R$ a commutative ring and $\Z_+$ the set of non-negative integers.

Let $\cA$ be an $R$-linear category, i.e. a category enriched over the category of $R$-modules. We assume that $\Ob(\cA)=\Z_+$ and $\Hom_{\cA}(m,n)=0$ if $m>n$. We set
\begin{equation}\label{category algebra}
A := \bigoplus_{m,n \in \Ob(\cA)} \Hom_{\cA}(m,n).
\end{equation}
There is a natural structure of a (non-unital) $R$-algebra on $A$. (In the terminology of \cite{BP}, $A$ is a \emph{$\Z$-algebra}.) We call $A$ the \emph{category algebra} of $\cA$. For each $n\in \Ob(\cA)$, we denote by $e_n$ the identity endomorphism of $n$. For any $m, n\in \Ob(\cA)$ with $m\leqslant n$, we set
\begin{equation*}
e_{m,n} := e_m+e_{m+1}+ \cdots+ e_n \in A.
\end{equation*}
One has: $Ae_{m,n} = Ae_m \oplus Ae_{m+1} \oplus \cdots \oplus Ae_n$.

A \emph{graded $A$-module} is an $A$-module $V$ such that $V = \bigoplus_{n\in \Ob(\cA)} e_n V$. Observe that if $V$ is a graded $A$-module, then any $A$-submodule of $V$ is also a graded $A$-module.

A graded $A$-module $V$ is \emph{finitely generated} if it is finitely generated as an $A$-module. Equivalently, $V$ is finitely generated if for some $N\in \Ob(\cA)$, there is an exact sequence $\bigoplus_{i\in I} Ae_{0,N} \to V \to 0$ where the indexing set $I$ is finite. A graded $A$-module $V$ is \emph{noetherian} if every $A$-submodule of $V$ is finitely generated.

A graded $A$-module $V$ is \emph{finitely presented} if for some $N\in \Ob(\cA)$, there is an exact sequence of the form
\begin{equation*}
\bigoplus_{j\in J} Ae_{0,N} \longrightarrow \bigoplus_{i\in I} Ae_{0,N} \longrightarrow V \longrightarrow 0
\end{equation*}
where both the indexing sets $I$ and $J$ are finite.

A graded $A$-module $V$ is \emph{presented in finite degrees} if for some $N\in \Ob(\cA)$, there is an exact sequence
\begin{equation} \label{presentation in finite degree exact sequence}
\bigoplus_{j\in J} Ae_{0,N} \longrightarrow \bigoplus_{i\in I} Ae_{0,N} \longrightarrow V \longrightarrow 0
\end{equation}
where the indexing sets $I$ and $J$ may be finite or infinite. The smallest $N$ for which such an exact sequence exists is called the \textit{presentation degree} of $V$, and we denote it by $\prd(V)$.

\begin{remark} \label{remark on presentation exact sequence}
It is easy to see that if $V$ is a graded $A$-module presented in finite degrees, then for every $N\geqslant \prd(V)$, there exists an exact sequence of the form \eqref{presentation in finite degree exact sequence}, where $I$ and $J$ may depend on $N$.
\end{remark}

The main definitions of this paper are as follows.

\begin{definition} \label{definition of central stability}
A graded $A$-module $V$ is called \emph{centrally stable} if for all $N$ sufficiently large, one has
\begin{equation} \label{central stability isomorphism}
 Ae  \otimes_{e A e} eV \cong V \quad \mbox{ where } e=e_{0,N}.
\end{equation}
\end{definition}

\begin{definition} \label{definition of d-step centrally stable}
Let $d$ be an integer $\geqslant 1$. A graded $A$-module $V$ is called \emph{$d$-step centrally stable} if for all $N$ sufficiently large, one has
\begin{equation*}
 Ae  \otimes_{e A e} eV \cong \bigoplus_{n\geqslant N-(d-1)} e_n V \quad \mbox{ where } e=e_{N-(d-1),N}.
\end{equation*}
\end{definition}

\subsection{Inductive descriptions}
We now explain the relation of our definition of central stability given above to the notion of central stability given by Putman and Sam in \cite{PS} for complemented categories with a generator, and the relation to the inductive description \eqref{colimit} in the special case of $\FI$-modules as given by Church, Farb, Ellenberg, and Nagpal \cite{CE, CEFN}.

Let $\cC$ be a small category such that $\Ob(\cC)=\Z_+$, and $\Hom_{\cC}(m,n)=\emptyset$ if $m>n$. We denote by $\cA _{\cC}$ the $R$-linear category with $\Ob(\cA_{\cC})=\Z_+$ and $\Hom_{\cA_{\cC}}(m,n)$ the free $R$-module with basis $\Hom_{\cC}(m,n)$ for each $m,n\in \Z_+$. Let $A_{\cC}$ be the category algebra of $\cA_{\cC}$; see \eqref{category algebra}.

Recall that a $\cC$-module over $R$ is, by definition, a (covariant) functor from $\cC$ to the category of $R$-modules. For any $\cC$-module $V$ over $R$ and $n\in\Z_+$, we write $V_n$ for $V(n)$. If $V$ is a $\cC$-module over $R$, then $\bigoplus_{n\in\Ob(\cC)} V_n$ is a graded $A_{\cC}$-module. This defines an equivalence from the category of $\cC$-modules over $R$ to the category of graded $A_{\cC}$-modules. We say that a $\cC$-module $V$ over $R$ has a certain property (such as centrally stable) if the graded $A_{\cC}$-module $\bigoplus_{n\in\Ob(\cC)} V_n$ has the property.

For any $M, N\in \Ob(\cC)$ with $M\leqslant N$, we write $\cC_{M,N}$ for the full subcategory of $\cC$ on the set of objects $\{M, M+1, \ldots, N\}$. Then the category of $\cC_{M,N}$-modules over $R$ is equivalent to the category of $e A_{\cC} e$-modules where $e=e_{M, N}$. Let $\iota_{M,N}:\cC_{M,N} \to \cC$ be the inclusion functor. We define
the \emph{restriction} functor $\mathrm{Res}_{M,N}$ along $\iota_{M,N}$ by
\begin{align*}
\mathrm{Res}_{M,N}: \mbox{(category of $\cC$-modules over $R$)} &\longrightarrow \mbox{(category of $\cC_{M,N}$-modules over $R$)}, \\
V &\longmapsto V\circ \iota_{M,N}.
\end{align*}
The \emph{left Kan extension functor} $\mathrm{Lan}_{M,N}$ along $\iota_{M,N}$ is a left adjoint functor to $\mathrm{Res}_{M,N}$; for every $\cC_{M,N}$-module $V$ over $R$ and $n\in \Ob(\cC)$, one has:
\begin{equation} \label{Lan formula}
(\mathrm{Lan}_{M,N}V)_n = \colim_{\substack{\alpha: s\to n \\ M\leqslant s\leqslant N }} V_s,
\end{equation}
where $V_s=V(s)$ and the colimit is taken over the (comma) category whose objects are the morphisms $\alpha : s\to n$ in $\cC$ such that $M\leqslant s\leqslant N$; see \cite[Theorem 2.3.3]{KS} or \cite[Section X.3, (10)]{Mac}.

The following proposition gives a reformulation for the notions of central stability and $d$-step central stability for $\cC$-modules over $R$.

\begin{proposition} \label{colimit formulation of central stability}
Let $V$ be a $\cC$-module over $R$. For every $M,N \in \Ob(\cC)$ with $M\leqslant N$, one has:
\begin{equation} \label{LanRes}
 \bigoplus_{n\in\Ob(\cC)} \left( \mathrm{Lan}_{M,N} (\mathrm{Res}_{M,N} V) \right)_n \cong A e \otimes_{eAe} e\Big( \bigoplus_{n\in\Ob(\cC)} V_n \Big) \quad \mbox{ where } A=A_{\cC} \mbox{ and } e = e_{M,N}.
\end{equation}
In particular, $V$ is centrally stable if and only if for all sufficiently large $N$, one has
\begin{equation*}
V_n \cong \colim_{\substack{\alpha: s\to n \\ s\leqslant N }} V_s  \quad \mbox{ for each }n\in \Z_+.
\end{equation*}
Moreover, $V$ is $d$-step centrally stable if and only if for all sufficiently large $N$, one has
\begin{equation*}
V_n \cong \colim_{\substack{\alpha: s\to n \\ N-(d-1)\leqslant s\leqslant N }} V_s \quad \mbox{ for each }n\geqslant N-(d-1).
\end{equation*}
\end{proposition}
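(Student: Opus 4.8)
The heart of the matter is the isomorphism \eqref{LanRes}; the two ``in particular'' assertions will then follow by evaluating at $n$ and substituting the colimit formula \eqref{Lan formula}. To prove \eqref{LanRes} I would argue by uniqueness of adjoints. Fix $M\leqslant N$ and set $e=e_{M,N}$. Under the equivalence between $\cC$-modules over $R$ and graded $A_{\cC}$-modules, multiplication by $e$ sends $\bigoplus_{n}V_n$ to $\bigoplus_{M\leqslant n\leqslant N}V_n$, equipped with precisely the $eA_{\cC}e$-action that corresponds, under the equivalence between $\cC_{M,N}$-modules and $eA_{\cC}e$-modules recalled above, to the $\cC_{M,N}$-module $V\circ\iota_{M,N}=\mathrm{Res}_{M,N}V$. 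Thus, under the two equivalences, the functor $e(-)$ is identified with $\mathrm{Res}_{M,N}$; this is the one point requiring genuine care, since it amounts to pinning down exactly which $eA_{\cC}e$-module structure the equivalence ``$\cC_{M,N}$-modules $\simeq$ $eA_{\cC}e$-modules'' assigns to $\bigoplus_{M\leqslant n\leqslant N}V_n$ and checking it is the restriction of the $\cC$-module structure on $V$ — a matter of unwinding definitions, but the only non-formal step. Once this is done, observe that $\mathrm{Lan}_{M,N}$ is left adjoint to $\mathrm{Res}_{M,N}$ and that $A_{\cC}e\otimes_{eA_{\cC}e}(-)$ is left adjoint to $e(-)$ (the key lemma from Morita theory recalled in Section \ref{central stability}); since left adjoints are unique up to natural isomorphism, the two functors are identified, which is \eqref{LanRes}. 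This identification is natural in $V$ and carries the counit $\mathrm{Lan}_{M,N}\mathrm{Res}_{M,N}V\to V$ of the Kan-extension adjunction to the canonical map $A_{\cC}e\otimes_{eA_{\cC}e}eV\to V$.

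For the reformulation of central stability, take $M=0$, so that $e=e_{0,N}$. By \eqref{LanRes} and \eqref{Lan formula}, the graded module $A_{\cC}e\otimes_{eA_{\cC}e}e(\bigoplus_nV_n)$ has $n$-th component $(\mathrm{Lan}_{0,N}\mathrm{Res}_{0,N}V)_n=\colim_{\alpha:s\to n,\,0\leqslant s\leqslant N}V_s$; since every object of $\cC$ is $\geqslant 0$, the constraint $0\leqslant s\leqslant N$ on the index category is simply $s\leqslant N$. Hence the isomorphism \eqref{central stability isomorphism} holds for a given $N$ (equivalently, the canonical map is an isomorphism) if and only if the canonical map $\colim_{\alpha:s\to n,\,s\leqslant N}V_s\to V_n$ is an isomorphism for every $n$, and the asserted equivalence follows by letting $N$ vary.

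For $d$-step central stability, take $M=N-(d-1)$, so that $e=e_{N-(d-1),N}$. As before, \eqref{LanRes} and \eqref{Lan formula} give $\big(A_{\cC}e\otimes_{eA_{\cC}e}eV\big)_n=\colim_{\alpha:s\to n,\,N-(d-1)\leqslant s\leqslant N}V_s$ for every $n$. The extra point is that this index category is empty when $n<N-(d-1)$: a morphism $s\to n$ with $s\geqslant N-(d-1)>n$ would have source strictly larger than target, but $\Hom_{\cC}(s,n)=\emptyset$ whenever $s>n$. Therefore $\big(A_{\cC}e\otimes_{eA_{\cC}e}eV\big)_n=0$ automatically for $n<N-(d-1)$, matching the absence of $e_nV$ from $\bigoplus_{n\geqslant N-(d-1)}e_nV$, and the condition of Definition \ref{definition of d-step centrally stable} reduces to the requirement that the canonical map $\colim_{\alpha:s\to n,\,N-(d-1)\leqslant s\leqslant N}V_s\to V_n$ be an isomorphism for every $n\geqslant N-(d-1)$, which is the claim. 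After the bookkeeping of the first paragraph, everything here is uniqueness of adjoints together with the explicit formula \eqref{Lan formula}.
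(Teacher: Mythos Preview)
Your proposal is correct and follows essentially the same approach as the paper: identify $e(-)$ with $\mathrm{Res}_{M,N}$ under the stated equivalences, invoke the adjunction between $Ae\otimes_{eAe}(-)$ and $e(-)$, and conclude \eqref{LanRes} by uniqueness of left adjoints, then read off the colimit statements from \eqref{Lan formula}. One small remark: the adjunction you cite is not the content of Lemma~\ref{main lemma} (which concerns when the counit is an isomorphism) but rather the standard tensor--hom adjunction for corner rings, which the paper simply asserts in its proof of the proposition; your argument is unaffected.
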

\begin{proof}
Let $e=e_{M,N}$. For any $\cC$-module $V$ over $R$, one has
\begin{equation*}\bigoplus_{M\leqslant n\leqslant N} \left( \mathrm{Res}_{M,N} V\right)_n = e\Big( \bigoplus_{n\in\Ob(\cC)} V_n \Big).
\end{equation*}
Since the functor:
\begin{align*}
\mbox{(category of $eAe$-modules)} &\longrightarrow \mbox{(category of graded $A$-modules)}, \\ W &\longmapsto Ae\otimes_{eAe} W,
\end{align*}
is left adjoint to the functor:
\begin{align*}
\mbox{(category of graded $A$-modules)} &\longrightarrow \mbox{(category of $eAe$-modules)}, \\ V\longmapsto eV,
\end{align*}
it follows that for any $\cC_{M,N}$-module $W$ over $R$, one has
\begin{equation*}
 \bigoplus_{M\leqslant n\leqslant N} \left( \mathrm{Lan}_{M,N} W \right)_n \cong  A e \otimes_{eAe} \Big( \bigoplus_{M\leqslant n\leqslant N} W_n \Big).
\end{equation*}
We have proven the isomorphism \eqref{LanRes}. The remaining statements now follow from the formula \eqref{Lan formula}.
\end{proof}

A complemented category with a generator, in the sense of Putman and Sam \cite{PS}, is the data of a symmetric monoidal category $\mathtt{A}$ and an object $X$ of $\mathtt{A}$ satisfying a list of axioms. We do not recall those axioms here since we will not need them; however, a consequence of the axioms is that the full subcategory $\cC$ of $\mathtt{A}$ on the set of objects $X^n$ for $n\in\Z_+$ is a skeleton of $\mathtt{A}$, and $\Hom_{\cC}(X^m, X^n)=\emptyset$ if $m>n$. We may identify the set of objects of $\cC$ with $\Z_+$ in the obvious way. Following Putman and Sam \cite[Theorem E]{PS}, an $\mathtt{A}$-module $V$ over $R$ is \emph{centrally stable} if for all sufficiently large $N$, the functor $V$ is the left Kan extension of the restriction of $V$ to the full subcategory of $\mathtt{A}$ spanned by the objects isomorphic to $X^n$ for some $n\leqslant N$.

\begin{corollary}
Let $\mathtt{A}$ be a complemented category with a generator $X$. Suppose that $\cC$ is the skeleton of $\mathtt{A}$ spanned by the objects $X^n$ for all $n\in \Z_+$. Let $V$ be an $\mathtt{A}$-module over $R$, and regard $V$ also as a $\cC$-module by restriction along the inclusion functor $\cC \to \mathtt{A}$. Then $V$ is centrally stable in the sense of Putman and Sam \cite[Theorem E]{PS} if and only if $V$ is centrally stable as a $\cC$-module over $R$.
\end{corollary}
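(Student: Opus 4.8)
The plan is to deduce the Corollary from Proposition \ref{colimit formulation of central stability} together with the elementary observation that left Kan extension, being characterized by a universal property, is transported along equivalences of categories.

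First I would record the relevant equivalences. For $N\in\Z_+$, write $\mathtt{A}_{\leqslant N}$ for the full subcategory of $\mathtt{A}$ on all objects isomorphic to $X^n$ for some $n\leqslant N$, and let $j_N\colon\mathtt{A}_{\leqslant N}\to\mathtt{A}$ be the inclusion. Since $\cC$ is a skeleton of $\mathtt{A}$, the inclusion $\Phi\colon\cC\hookrightarrow\mathtt{A}$ is an equivalence; its restriction $\Phi_N\colon\cC_{0,N}\hookrightarrow\mathtt{A}_{\leqslant N}$ exhibits $\cC_{0,N}$ as a skeleton of $\mathtt{A}_{\leqslant N}$ and is therefore also an equivalence. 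All four functors being inclusions of full subcategories, the square with $\iota_{0,N}\colon\cC_{0,N}\to\cC$, $j_N$, $\Phi$, and $\Phi_N$ commutes strictly: $j_N\circ\Phi_N=\Phi\circ\iota_{0,N}$.

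Next I would transport the functors $\mathrm{Res}$ and $\mathrm{Lan}$ across this square. Precomposition with an equivalence induces an equivalence on module categories, and from the commuting square one computes directly that $\mathrm{Res}_{0,N}\circ\Phi^{*}=\Phi_N^{*}\circ\mathrm{Res}_{j_N}$, where $\Phi^{*}$, $\Phi_N^{*}$, and $\mathrm{Res}_{j_N}$ denote restriction along $\Phi$, $\Phi_N$, and $j_N$. Passing to left adjoints and using that an equivalence together with a quasi-inverse forms an adjoint pair in either order, it follows that left Kan extension along $j_N$ is carried, under these equivalences, to $\mathrm{Lan}_{0,N}$, and — by the uniqueness of adjoints up to compatible isomorphism — the counit of the $(\mathrm{Lan}_{j_N},\mathrm{Res}_{j_N})$-adjunction is carried to the counit of the $(\mathrm{Lan}_{0,N},\mathrm{Res}_{0,N})$-adjunction. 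Since $\Phi^{*}$ is an equivalence, it preserves and reflects isomorphisms; hence for an $\mathtt{A}$-module $V$, the counit $\mathrm{Lan}_{j_N}(\mathrm{Res}_{j_N}V)\to V$ is an isomorphism if and only if the counit $\mathrm{Lan}_{0,N}(\mathrm{Res}_{0,N}(\Phi^{*}V))\to\Phi^{*}V$ is an isomorphism, where $\Phi^{*}V$ is precisely $V$ regarded as a $\cC$-module by restriction along $\cC\to\mathtt{A}$. By definition, $V$ is centrally stable in the sense of Putman and Sam \cite[Theorem E]{PS} exactly when the former holds for all $N$ sufficiently large.

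Finally I would invoke Proposition \ref{colimit formulation of central stability}: by the isomorphism \eqref{LanRes} with $M=0$ and $e=e_{0,N}$, the counit $\mathrm{Lan}_{0,N}(\mathrm{Res}_{0,N}(\Phi^{*}V))\to\Phi^{*}V$ is identified with the canonical map
\[
 Ae\otimes_{eAe} e\Big(\bigoplus_{n\in\Ob(\cC)}V_n\Big)\longrightarrow \bigoplus_{n\in\Ob(\cC)}V_n ,
\]
both sides being left adjoint, respectively, to restriction along $\iota_{0,N}$ and to multiplication by $e$. Hence this map is an isomorphism for all $N$ sufficiently large precisely when $\Phi^{*}V$ is centrally stable in the sense of Definition \ref{definition of central stability}, which is the content of the Corollary. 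I expect the work to be entirely bookkeeping rather than anything substantive; the one point requiring care is that Putman and Sam use the (generally non-skeletal) subcategory $\mathtt{A}_{\leqslant N}$, whereas Definition \ref{definition of central stability} is phrased through the skeleton $\cC_{0,N}$, so one must verify, as above, that passing between the two does not affect whether $V$ is the left Kan extension of its restriction.
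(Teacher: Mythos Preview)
Your proof is correct and follows the same route as the paper, which dispatches the corollary in one line as ``immediate from \eqref{LanRes}''. You have simply made explicit the bookkeeping behind that word---in particular the passage from the (possibly non-skeletal) truncation $\mathtt{A}_{\leqslant N}$ used by Putman--Sam to the skeletal truncation $\cC_{0,N}$, and the identification of the counit with the multiplication map $Ae\otimes_{eAe}eV\to V$---which the paper leaves to the reader.
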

\begin{proof}
This is immediate from \eqref{LanRes}.
\end{proof}

The full subcategory of $\FI$ spanned by the objects $[n]$ for all $n\in \Z_+$ is a skeleton of $\FI$. In the following corollary, we identify the set of objects of this skeleton with $\Z_+$ in the obvious way.

\begin{corollary} \label{inductive central stability of FI-modules}
Suppose that $\cC$ is the skeleton of $\FI$ spanned by the objects $[n]$ for all $n\in \Z_+$. Let $V$ be an $\FI$-module over $R$, and regard $V$ also as a $\cC$-module by restriction along the inclusion functor $\cC\to \FI$. Then $V$ is centrally stable as a $\cC$-module over $R$ if and only if for all sufficiently large $N$, the isomorphism \eqref{colimit} holds.
\end{corollary}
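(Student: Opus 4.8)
The plan is to reduce to a cofinality statement and then apply Proposition~\ref{colimit formulation of central stability}. By that proposition, $V$ is centrally stable as a $\cC$-module over $R$ if and only if for all sufficiently large $N$ one has $V_n \cong \colim_{\alpha\colon s\to n,\ s\leqslant N} V_s$ for every $n\in\Z_+$, where the colimit runs over the comma category $\mathcal{D}_n$ whose objects are the injections $\alpha\colon[s]\hookrightarrow[n]$ with $s\leqslant N$ and whose morphisms $(s,\alpha)\to(s',\alpha')$ are the injections $\beta\colon[s]\hookrightarrow[s']$ with $\alpha'\beta=\alpha$. Since the isomorphism~\eqref{colimit} asserts exactly that for all sufficiently large $N$ one has $V_n\cong\colim_{S\subseteq[n],\ |S|\leqslant N}V(S)$ for every $n$, it suffices to prove, for each fixed $N$ and $n$, that $\colim_{\mathcal{D}_n}\big((s,\alpha)\mapsto V_s\big)$ is isomorphic to $\colim_{\cP_n}\big(S\mapsto V(S)\big)$, where $\cP_n$ denotes the poset of subsets of $[n]$ of cardinality at most $N$, ordered by inclusion.

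First I would introduce the functor $F\colon\mathcal{D}_n\to\cP_n$, $(s,\alpha)\mapsto\im\alpha$, which on a morphism $\beta\colon(s,\alpha)\to(s',\alpha')$ records the inclusion $\im\alpha\subseteq\im\alpha'$ (valid because $\alpha=\alpha'\beta$). For each object $(s,\alpha)$ the corestriction of $\alpha$ is a bijection $[s]\to\im\alpha$ in $\FI$, so applying $V$ gives an isomorphism $V_s\cong V(\im\alpha)$; a routine check using $\alpha=\alpha'\beta$ shows these are natural in $(s,\alpha)$, hence define a natural isomorphism between the diagram $(s,\alpha)\mapsto V_s$ and the composite $\mathcal{D}_n\xrightarrow{F}\cP_n\xrightarrow{V(-)}R\text{-Mod}$. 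Thus the first colimit above coincides with $\colim_{\mathcal{D}_n}(V\circ F)$.

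It then remains to show that $F$ is a final functor, since for a final functor $\colim_{\mathcal{D}_n}(V\circ F)\cong\colim_{\cP_n}V$ (see, e.g., \cite{Mac}), which completes the argument; this is the one substantive step. Fixing $S\in\cP_n$, I must check that the comma category $(S\downarrow F)$ is nonempty and connected: its objects are the $(s,\alpha)\in\mathcal{D}_n$ with $S\subseteq\im\alpha$, and its morphisms are those of $\mathcal{D}_n$ between such objects (the required triangles in $\cP_n$ commute automatically, $\cP_n$ being a poset). Nonemptiness is immediate, since the order-preserving bijection $[\,|S|\,]\to S\subseteq[n]$ is such an object and $|S|\leqslant N$. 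For connectedness, given any $(s,\alpha)$ with $S\subseteq\im\alpha$, let $\beta\colon[\,|S|\,]\to[s]$ be the order-preserving bijection onto $\alpha^{-1}(S)$ and put $\gamma=\alpha\beta$; then $\im\gamma=S$ and $\beta\colon(|S|,\gamma)\to(s,\alpha)$ is a morphism in $(S\downarrow F)$, so every object connects to one with image exactly $S$. Finally, any two objects $(|S|,\delta)$ and $(|S|,\delta')$ with $\im\delta=\im\delta'=S$ are joined by the permutation $\pi$ of $[\,|S|\,]$ determined by $\delta\pi=\delta'$, which is a morphism $(|S|,\delta')\to(|S|,\delta)$ in $(S\downarrow F)$. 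Hence $(S\downarrow F)$ is connected, $F$ is final, and the claim follows. I expect the connectedness of $(S\downarrow F)$ to be the only genuine point; the naturality check and the reduction through Proposition~\ref{colimit formulation of central stability} are bookkeeping.
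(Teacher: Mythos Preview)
Your argument is correct and follows essentially the same route as the paper: both reduce via Proposition~\ref{colimit formulation of central stability} to showing that the image functor $(s,\alpha)\mapsto\im\alpha$ from the comma category to the poset of small subsets is final. You have simply supplied the details the paper omits---the explicit verification that each slice $(S\downarrow F)$ is nonempty and connected, and the natural isomorphism identifying the diagram $(s,\alpha)\mapsto V_s$ with $V\circ F$---all of which are handled correctly.
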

\begin{proof}
By Proposition \ref{colimit formulation of central stability}, it suffices to show that
\begin{equation*}
\colim_{\substack{\alpha: s\to n \\ s\leqslant N }} V_s \cong \colim_{\substack{S\subset [n] \\ |S|\leqslant N}} V(S).
\end{equation*}
But this is immediate from the observation that the natural functor:
\begin{align*}
\{ \alpha \mid \alpha \in \Hom_{\FI}([s],[n]) \mbox{ and } s\leqslant N \} &\longrightarrow \{ S \mid S\subset [n] \mbox{ and } |S|\leqslant N \},\\
\alpha &\longmapsto \mathrm{Im}(\alpha),
\end{align*}
is final; see \cite[Theorem IX.3.1]{Mac}. (Some authors refer to final functors as cofinal functors. See, for example, \cite[Definition 2.5.1]{KS}.)
\end{proof}

Let us also mention that for a principal ideal domain $R$, Dwyer defined the notion of a \emph{central coefficient system} $\rho$ in \cite{Dw} to mean a sequence $\rho_n$ of $\mathrm{GL}_n(R)$-modules and maps $F_n: \rho_n \to \rho_{n+1}$ such that $F_n$ is a $\mathrm{GL}_n(R)$-map (when $\rho_{n+1}$ is considered as a $\mathrm{GL}_n(R)$-module by restriction) and the image of $F_{n+1}F_n$ is invariant under the action of the permutation matrix $s_{n+2}\in \mathrm{GL}_{n+2}(R)$ which interchanges the last two standard basis vectors of $R^{n+2}$. Suppose that $\cC$ is the skeleton of $\FI$ spanned by the objects $[n]$ for all $n\in \Z_+$. A central coefficient system $\rho$ defines a $\cC$-module $V$ over $R$ with $V_n = \rho_n$ and such that the standard inclusion $[n]\hookrightarrow [n+1]$ induces the map $F_n$. If $V$ is finitely generated as a $\cC$-module over $R$, then by \cite[Theorem C]{CEFN} it is a centrally stable $\cC$-module over $R$.

\section{Central stability} \label{central stability}

\subsection{Key lemma} The following lemma is a standard result in Morita theory; see, for instance, \cite[Theorem 6.4.1]{Cohn}. We recall its proof here since it plays a key role in the proof of our main results.

\begin{lemma} \label{main lemma}
Let $A$ be any (possibly non-unital) ring, and $e$ an idempotent element of $A$. If $V$ is an $A$-module such that, for some indexing sets $I$ and $J$, there is an exact sequence
\begin{equation} \label{finite presentation}
\bigoplus_{j\in J} Ae \longrightarrow \bigoplus_{i\in I} Ae \longrightarrow V \longrightarrow 0,
\end{equation}
then $Ae \otimes_{eAe} eV \cong V$.
\end{lemma}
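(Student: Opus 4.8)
The plan is to prove the isomorphism by exploiting the fact that the functor $W \mapsto Ae \otimes_{eAe} W$ (from $eAe$-modules to $A$-modules) is left adjoint to the functor $V \mapsto eV$, together with the fact that both functors are exact on the relevant objects and that the natural counit map $Ae \otimes_{eAe} eV \to V$ is an isomorphism when $V = Ae$.

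First I would write down the natural counit transformation $\varepsilon_V \colon Ae \otimes_{eAe} eV \to V$, $ae \otimes ev \mapsto aev$, arising from the adjunction (or simply check directly that this is a well-defined $A$-module map). The key base case is $V = Ae$: here $e(Ae) = eAe$, and $Ae \otimes_{eAe} eAe \cong Ae$ canonically, so $\varepsilon_{Ae}$ is an isomorphism. Since both functors $W \mapsto Ae\otimes_{eAe} W$ and $V \mapsto eV$ commute with arbitrary direct sums (tensor products and the multiplication-by-$e$ functor, being given by applying $e(-)$, both preserve coproducts), it follows that $\varepsilon_{\bigoplus_{i} Ae}$ is an isomorphism for any (possibly infinite) indexing set.

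Next I would apply the functor $Ae \otimes_{eAe} e(-)$ to the given presentation \eqref{finite presentation}. The subtle point is right-exactness: $e(-)$ is exact (it is a direct summand functor, $V \mapsto eV$ is exact on all $A$-modules), and $Ae \otimes_{eAe} (-)$ is right exact, so the composite is right exact. Applying it to \eqref{finite presentation} and comparing with \eqref{finite presentation} itself via the natural transformation $\varepsilon$ yields a commutative diagram with exact rows
\begin{equation*}
\begin{array}{ccccccc}
Ae \otimes_{eAe} e\bigl(\bigoplus_{j} Ae\bigr) & \longrightarrow & Ae \otimes_{eAe} e\bigl(\bigoplus_{i} Ae\bigr) & \longrightarrow & Ae\otimes_{eAe} eV & \longrightarrow & 0 \\
\downarrow & & \downarrow & & \downarrow & & \\
\bigoplus_{j} Ae & \longrightarrow & \bigoplus_{i} Ae & \longrightarrow & V & \longrightarrow & 0
\end{array}
\end{equation*}
in which the left two vertical maps are isomorphisms by the previous paragraph. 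A diagram chase (the five lemma, or just right-exactness) then forces the right vertical map $\varepsilon_V$ to be an isomorphism, which is exactly the claim.

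The main obstacle, and the only place requiring genuine care, is the exactness bookkeeping: one must be sure that $e(-)$ is exact (true, since $e(\ker f) = \ker(e f)$ and similarly for cokernels, as $eV$ is a direct summand of $V$ viewed through the idempotent) and that $Ae\otimes_{eAe}(-)$ being merely right exact still suffices — it does, since we only need the cokernel end of the sequence. I would also take a moment to confirm that $\varepsilon$ is genuinely natural in $V$ so that the square commutes; this is automatic from the adjunction but worth stating. No finiteness of $I$ or $J$ is used anywhere, which is why the lemma applies equally to finitely presented modules and to modules presented in finite degrees.
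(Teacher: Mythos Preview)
Your proof is correct and follows essentially the same approach as the paper's: apply the right-exact functor $Ae\otimes_{eAe} e(-)$ to the presentation \eqref{finite presentation}, compare with the original sequence via the counit map, observe that the two left vertical arrows are isomorphisms because $Ae\otimes_{eAe} eAe \cong Ae$, and conclude by the five lemma. The paper's version is simply terser, omitting the explicit discussion of the adjunction, the exactness of $e(-)$, and the preservation of direct sums that you spell out.
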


\begin{proof}
Applying the right-exact functor $Ae\otimes_{eAe} e(-)$ to (\ref{finite presentation}), we obtain the first row in the following commuting diagram:
\begin{equation} \label{diagram in main lemma}
\xymatrix{
\bigoplus_{j\in J} Ae \otimes_{eAe} eAe \ar[r]\ar[d] & \bigoplus_{i\in I} Ae \otimes_{eAe} eAe \ar[r]\ar[d] & Ae\otimes_{eAe} eV \ar[r]\ar[d] & 0 \\
\bigoplus_{j\in J} Ae \ar[r] & \bigoplus_{i\in I} Ae \ar[r] & V \ar[r] & 0
}
\end{equation}
Both the rows in (\ref{diagram in main lemma}) are exact. Since the two leftmost vertical maps are isomorphisms, the rightmost vertical map is also an isomorphism.
\end{proof}

\subsection{Central stability and presentation in finite degrees}
Let $\cA$ be an $R$-linear category such that $\Ob(\cA)=\Z_+$ and $\Hom_{\cA}(m,n)=0$ if $m>n$. Denote by $A$ the category algebra of $\cA$; see \eqref{category algebra}.

\begin{theorem} \label{general centrally stable}
Let $V$ be a graded $A$-module. Then $V$ is presented in finite degrees if and only if it is centrally stable. Moreover, if $V$ is presented in finite degrees, then the isomorphism \eqref{central stability isomorphism} holds if and only if $N\geqslant \prd(V)$.
\end{theorem}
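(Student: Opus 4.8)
The plan is to prove the two implications separately and then pin down the exact range of $N$, leaning on Lemma \ref{main lemma} for the easy direction and on a finiteness argument for the reverse.

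First I would prove that if $V$ is presented in finite degrees then $V$ is centrally stable, and in fact the isomorphism \eqref{central stability isomorphism} holds whenever $N\geqslant \prd(V)$. Here I would take $e = e_{0,N}$ with $N\geqslant \prd(V)$. By Remark \ref{remark on presentation exact sequence} there is a presentation of the form \eqref{presentation in finite degree exact sequence} with that value of $N$, i.e. an exact sequence $\bigoplus_{j\in J} Ae \to \bigoplus_{i\in I} Ae \to V \to 0$. Lemma \ref{main lemma}, applied to the ring $A$ (non-unital) and the idempotent $e=e_{0,N}$, then gives exactly $Ae \otimes_{eAe} eV \cong V$. It remains to check that this isomorphism is the canonical counit-type map (so that ``$\cong$'' is genuinely realized by the natural map appearing in Definition \ref{definition of central stability}); this is immediate from the construction in the proof of Lemma \ref{main lemma}, since the rightmost vertical map in diagram \eqref{diagram in main lemma} is precisely the natural transformation $Ae\otimes_{eAe} eV \to V$ evaluated on $V$.

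Next I would prove the converse: if $V$ is centrally stable then $V$ is presented in finite degrees. Suppose \eqref{central stability isomorphism} holds for some $N$, and set $e = e_{0,N}$. Then $V \cong Ae \otimes_{eAe} eV$. The plan is to produce a presentation of $V$ by generators and relations in degrees $\leqslant N$ directly from this isomorphism. Writing $eV$ as a quotient of a free $eAe$-module, say $\bigoplus_{i\in I} eAe \to eV \to 0$, and applying the right-exact functor $Ae\otimes_{eAe}(-)$, we get a surjection $\bigoplus_{i\in I} Ae \twoheadrightarrow Ae\otimes_{eAe} eV \cong V$; this already shows $V$ is generated in degrees $\leqslant N$. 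To get the relations also in degrees $\leqslant N$, I would choose the free cover of $eV$ more carefully, or iterate: pick a free presentation $\bigoplus_{j\in J} eAe \to \bigoplus_{i\in I} eAe \to eV \to 0$ of the $eAe$-module $eV$, apply $Ae\otimes_{eAe}(-)$, and use that $Ae\otimes_{eAe} eAe \cong Ae$ together with right-exactness to obtain the exact sequence $\bigoplus_{j\in J} Ae \to \bigoplus_{i\in I} Ae \to Ae\otimes_{eAe} eV \to 0$. Composing the last map with the isomorphism $Ae\otimes_{eAe} eV \cong V$ yields precisely a presentation of the form \eqref{presentation in finite degree exact sequence} with this $N$, so $V$ is presented in finite degrees and $\prd(V)\leqslant N$.

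The combination of the last two paragraphs gives, in particular, the ``moreover'' clause in both directions: if $V$ is presented in finite degrees, then \eqref{central stability isomorphism} holds for every $N\geqslant \prd(V)$ (first paragraph of the argument), and conversely if \eqref{central stability isomorphism} holds for some $N$ then $\prd(V)\leqslant N$ (second paragraph), so the set of $N$ for which \eqref{central stability isomorphism} holds is exactly $\{N : N\geqslant \prd(V)\}$. The main obstacle I anticipate is purely bookkeeping: verifying that the abstract isomorphism produced by Lemma \ref{main lemma} agrees with the canonical map named in Definition \ref{definition of central stability}, and checking compatibility of the functor $Ae\otimes_{eAe}(-)$ with the idempotent decompositions $Ae_{m,n} = \bigoplus_{k=m}^n Ae_k$ so that everything stays within graded modules; neither step is deep, but both must be spelled out to make the ``if and only if'' on $N$ rigorous.
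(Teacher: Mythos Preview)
Your proposal is correct and follows essentially the same route as the paper: invoke Remark~\ref{remark on presentation exact sequence} and Lemma~\ref{main lemma} for the forward direction, and for the converse take a free $eAe$-presentation of $eV$ and apply the right-exact functor $Ae\otimes_{eAe}(-)$ to produce a presentation of $V$ in degrees $\leqslant N$. Your additional worries about the counit map and graded compatibility are not needed, since Definition~\ref{definition of central stability} only asks for an abstract isomorphism, and the paper's proof does not address them either.
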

\begin{proof}
Suppose that $V$ is presented in finite degrees, and $N\geqslant \prd(V)$. Then there is an exact sequence \eqref{presentation in finite degree exact sequence} with $e=e_{0,N}$; see Remark \ref{remark on presentation exact sequence}. Hence, by Lemma \ref{main lemma}, one has $Ae\otimes_{eAe} eV \cong V$.

Conversely, suppose that $N$ is an integer such that $Ae \otimes_{eAe} eV \cong V$, where $e=e_{0,N}$. There is an exact sequence $\bigoplus_{j\in J} eAe \to \bigoplus_{i\in I} eAe \to eV \to 0$ for some indexing sets $I$ and $J$. Applying the functor $Ae\otimes_{eAe} (-)$, we obtain an exact sequence of the form \eqref{presentation in finite degree exact sequence} with $e=e_{0,N}$. Therefore, $V$ is presented in finite degrees and $N \geqslant \prd(V)$.
\end{proof}

\begin{corollary} \label{noetherian case}
If every finitely generated graded $A$-module is noetherian, then every finitely generated graded $A$-module is centrally stable.
\end{corollary}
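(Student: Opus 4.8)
The plan is to combine Theorem~\ref{general centrally stable} with a routine noetherian argument. By Theorem~\ref{general centrally stable}, a graded $A$-module is centrally stable precisely when it is presented in finite degrees, so it suffices to show that under the hypothesis every finitely generated graded $A$-module is (even finitely) presented.

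So let $V$ be a finitely generated graded $A$-module. First I would choose an integer $N$, a finite set $I$, and a surjection $P := \bigoplus_{i\in I} Ae_{0,N} \twoheadrightarrow V$, and let $K$ be its kernel. The key point is that $P$ is itself a finitely generated graded $A$-module, hence noetherian by hypothesis; and $K$, being an $A$-submodule of the graded module $P$, is again a graded $A$-module (as noted in the excerpt), so it is finitely generated. Thus there exist a finite set $J$ and an integer $N'$ and a surjection $\bigoplus_{j\in J} Ae_{0,N'} \twoheadrightarrow K$.

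The only mild bookkeeping step is to put these two presentations at a common degree: replacing $N$ by $N'' := \max\{N, N'\}$ and using the direct sum decompositions $Ae_{0,N''} = Ae_{0,N} \oplus Ae_{N+1,N''}$ (and similarly over $N'$), which come from the orthogonality of the idempotents $e_m$, one obtains surjections $\bigoplus_{i\in I} Ae_{0,N''} \twoheadrightarrow V$ and $\bigoplus_{j\in J} Ae_{0,N''} \twoheadrightarrow K$. Splicing the latter (composed with $K \hookrightarrow \bigoplus_{i\in I} Ae_{0,N''}$) onto the former yields an exact sequence $\bigoplus_{j\in J} Ae_{0,N''} \to \bigoplus_{i\in I} Ae_{0,N''} \to V \to 0$ with $I$ and $J$ finite; in particular $V$ is presented in finite degrees, hence centrally stable by Theorem~\ref{general centrally stable}.

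There is essentially no hard step here: the whole content is that the noetherian hypothesis applies to the free module $P$ (because it is finitely generated) and therefore to its kernel $K$ (because submodules of graded modules are graded, and every submodule of a noetherian module is finitely generated). If I had to single out a place that requires a moment's care, it is the passage from a presentation over $Ae_{0,N}$ to one over $Ae_{0,N''}$, but this is immediate once the idempotent decomposition above is written down.
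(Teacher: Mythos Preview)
Your argument is essentially the paper's one-line proof (noetherian $\Rightarrow$ finitely presented $\Rightarrow$ centrally stable via Theorem~\ref{general centrally stable}), with the standard noetherian-implies-finitely-presented step written out in full.

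There is one small bookkeeping slip in that step. When you pass from $\bigoplus_{i\in I} Ae_{0,N}$ to $\bigoplus_{i\in I} Ae_{0,N''}$ (sending the extra summand $\bigoplus_{i\in I} Ae_{N+1,N''}$ to zero in $V$), the kernel of the new surjection onto $V$ is not $K$ but $K \oplus \bigoplus_{i\in I} Ae_{N+1,N''}$; hence the spliced sequence you write down is not exact at the middle term. The fix is immediate---enlarge the index set on the relations side to $J \sqcup I$, with the extra $I$-indexed copies of $Ae_{0,N''}$ projecting onto that new summand---so the overall argument stands.
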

\begin{proof}
If $V$ is a finitely generated graded $A$-module, then $V$ is finitely presented, hence centrally stable by Theorem \ref{general centrally stable}.
\end{proof}

The above proofs are very simple in comparison to the proofs for the special cases given in \cite[Theorem B]{CE}, \cite[Theorem C]{CEFN}, and \cite[Theorem E]{PS}. Let us emphasize, however, that the crux is to recognize that the notion of central stability in \cite{CE, CEFN, PS} can be reformulated in Morita theory; \emph{this was not a priori obvious}.

\subsection{Central stability and noetherian property}
We prove a partial converse to Corollary \ref{noetherian case}.

Let $\cA$ be an $R$-linear category such that $\Ob(\cA)=\Z_+$ and $\Hom_{\cA}(m,n)=0$ if $m>n$. Denote by $A$ the category algebra of $\cA$; see \eqref{category algebra}. We say that $A$ is \emph{locally finite} if $\Hom_{\cA}(m,n)$ is a finitely generated $R$-module for all $m, n \in \Z_+$.

\begin{corollary}
Suppose that $R$ is a commutative noetherian ring and $A$ is locally finite. If every finitely generated graded $A$-module is centrally stable, then every finitely generated graded $A$-module is noetherian.
\end{corollary}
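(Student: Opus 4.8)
The plan is to reduce the statement to the claim that $Ae_{0,N}$ is a noetherian graded $A$-module for every $N\in\Z_+$. This reduction is routine: every finitely generated graded $A$-module is a quotient of a finite direct sum of copies of some $Ae_{0,N}$, and the class of noetherian graded $A$-modules is closed under quotients and finite direct sums. So I would fix $N_0\in\Z_+$ and a graded submodule $W\subseteq Ae_{0,N_0}$, and the goal becomes to show that $W$ is finitely generated.

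First I would pass to the quotient $V:=Ae_{0,N_0}/W$. It is finitely generated, hence centrally stable by hypothesis, hence presented in finite degrees by Theorem~\ref{general centrally stable}. Pick $N\geqslant\max\{N_0,\prd(V)\}$ and set $e:=e_{0,N}$. By Theorem~\ref{general centrally stable} the counit $Ae\otimes_{eAe}eV\to V$ is an isomorphism, and since $Ae_{0,N_0}$ is a direct summand of the free module $Ae=Ae_{0,N}$, the counit $Ae\otimes_{eAe}e(Ae_{0,N_0})\to Ae_{0,N_0}$ is an isomorphism as well. I would then apply the exact functor $e(-)$ to $0\to W\to Ae_{0,N_0}\to V\to 0$, follow with the right-exact functor $Ae\otimes_{eAe}(-)$, and compare the result with the original sequence through the natural counit maps --- precisely the diagram chase used in Lemma~\ref{main lemma}. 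The upshot should be that the image of $Ae\otimes_{eAe}eW$ inside $Ae_{0,N_0}$ is exactly $W$; since that image is visibly the $A$-submodule generated by $eW$, we obtain $W=A(eW)$, i.e. $W$ is generated in degrees $\leqslant N$.

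It remains to use the hypotheses that $A$ is locally finite and $R$ is noetherian. The $R$-module $eW$ lies inside $eAe_{0,N_0}$, which is a finite direct sum of $R$-modules of the form $\Hom_{\cA}(i,j)$ with $i\leqslant N_0$ and $j\leqslant N$, each finitely generated over $R$ by local finiteness; hence $eAe_{0,N_0}$ is a finitely generated $R$-module, and since $R$ is noetherian, $eW$ is finitely generated over $R$. A finite $R$-generating set of $eW$ also generates $eW$ over $eAe$ (the idempotent $e$ acts as the identity on $eW$), so applying $Ae\otimes_{eAe}(-)$ to a finite presentation of $eW$ over $eAe$ --- exactly as in the converse half of Theorem~\ref{general centrally stable} --- exhibits $W=A(eW)$ as a quotient of a finite direct sum of copies of $Ae=Ae_{0,N}$. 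Thus $W$ is finitely generated, $Ae_{0,N_0}$ is noetherian, and the reduction in the first paragraph completes the proof.

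I expect the only genuinely non-formal step, and the crux of the argument, to be the passage in the second paragraph from the bounded presentation degree of the \emph{quotient} $V$ to the conclusion that the \emph{submodule} $W$ is generated in bounded degrees. Abstractly this is the statement that if $0\to K\to P\to V\to 0$ is exact with $P$ a projective graded $A$-module generated in degrees $\leqslant N$ and $\prd(V)\leqslant N$, then $K$ is generated in degrees $\leqslant N$; its proof is the counit diagram chase indicated above, a mild extension of Lemma~\ref{main lemma}. Along the way one should check that $e(-)$ is exact on graded $A$-modules and that $Ae\otimes_{eAe}e(-)$ respects the direct-sum decomposition $Ae_{0,N}=Ae_{0,N_0}\oplus(Ae_{N_0+1}\oplus\cdots\oplus Ae_N)$, but both are immediate.
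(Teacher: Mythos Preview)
Your proof is correct and shares the same core idea as the paper's: pass to the quotient $V/U$ (or $Ae_{0,N_0}/W$), use the hypothesis plus Theorem~\ref{general centrally stable} to bound its presentation degree, and then combine this with local finiteness and noetherianity of $R$ to conclude.

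The execution differs slightly. The paper works directly with an arbitrary finitely generated $V$ and submodule $U$: it lifts a bounded presentation of $V/U$ to a diagram over $0\to U\to V\to V/U\to 0$, then uses the Snake Lemma to show that both $\mathrm{Im}(g)$ and $\mathrm{Coker}(g)$ are finitely generated (the latter because $\mathrm{Coker}(g)\cong\mathrm{Coker}(f)$ is a quotient of $V$). Your preliminary reduction to the case $V=Ae_{0,N_0}$ buys you that the counit $Ae\otimes_{eAe}e(Ae_{0,N_0})\to Ae_{0,N_0}$ is already an isomorphism, so the cokernel issue disappears and the diagram chase reduces to the observation that $W=A(eW)$. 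This is a mild streamlining of the paper's argument at the cost of the (routine) reduction step; conversely, the paper's version applies uniformly to any finitely generated $V$ without first passing to a free cover.
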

\begin{proof}
Let $V$ be a finitely generated graded $A$-module and $U$ a (graded) $A$-submodule of $V$. We want to prove that $U$ is finitely generated as a graded $A$-module. Since $V/U$ is a finitely generated graded $A$-module, it is centrally stable, hence presented in finite degrees by Theorem \ref{general centrally stable}. Thus, for some $N\in \Ob(\cA)$, there is a commuting diagram
\begin{equation*}
\xymatrix{
& \bigoplus_{j\in J} Ae_{0,N} \ar[r] \ar[d]^-{g} & \bigoplus_{i\in I} Ae_{0,N} \ar[r] \ar[d]^-{f} & V/U \ar[r] \ar@{=}[d] & 0 \\
0 \ar[r] & U \ar[r] & V \ar[r] & V/U \ar[r] & 0.
}
\end{equation*}
It suffices to prove that both $\mathrm{Im}(g)$ and  $\mathrm{Coker} (g)$ are finitely generated graded $A$-modules.

By the Snake Lemma, there is an isomorphism $\mathrm{Coker} (g) \cong \mathrm{Coker} (f)$, so $\mathrm{Coker} (g)$ is a finitely generated graded $A$-module.

Since $A$ is locally finite and $V$ is finitely generated as a graded $A$-module, it follows that $e_{0,N}V$ is a finitely generated $R$-module, hence $e_{0,N}V$ is a noetherian $R$-module. But $\mathrm{Im}(g)$ is generated as a graded $A$-module by $e_{0,N}\mathrm{Im}(g)$, and $e_{0,N}\mathrm{Im}(g)$ is an $R$-submodule of the noetherian $R$-module $e_{0,N}V$, hence $\mathrm{Im}(g)$ is finitely generated as a graded $A$-module.
\end{proof}

\subsection{Examples} \label{subsection examples}
Let us mention some examples of combinatorial categories introduced by Sam and Snowden \cite{SS-Grobner} which fall within our framework. The central stability of modules presented in finite degrees for these categories follow from Theorem \ref{general centrally stable} (and were not previously known).

In the following examples, the set of objects of the category $\cC$ is $\Z_+$. For any $n\in \Z_+$, we set $[n]:=\{1,\ldots, n\}$.

(i) $\cC=\FI_a$ where $a$ is an integer $\geqslant 1$. For any $m, n \in \Z_+$, a morphism $m\to n$ in $\FI_a$ is a pair $(f,c)$ where $f:[m]\to [n]$ is an injective map and $c:[n]\setminus \mathrm{Im}(f) \to [a]$ is any map. The composition of $(f_1,c_1) : m\to n$ and $(f_2,c_2): n\to \ell$ is defined to be $(f_2\circ f_1, c):m\to \ell$ where $c(i)=c_1(j)$ if $i=f_2(j)$ for some $j\in [n]\setminus \mathrm{Im}(f_1)$, and $c(i)=c_2(i)$ if $i\in [\ell]\setminus \mathrm{Im}(f_2)$.

(ii) $\cC=\OI_a$ where $a$ is an integer $\geqslant 1$. For any $m, n \in \Z_+$, a morphism $m\to n$ in $\OI_a$ is a pair $(f,c)$ where $f:[m]\to [n]$ is a strictly increasing map and $c:[n]\setminus \mathrm{Im}(f) \to [a]$ is any map. The composition of morphisms is defined in the same way as above for $\FI_a$.

(iii) $\cC=\FS^{\op}$, the opposite of the category $\FS$. For any $m,n \in \Z_+$, a morphism $n\to m$ in $\FS$ is a surjective map $[n]\to [m]$. The composition of morphisms in $\FS$ is defined to be the composition of maps.

It was proved by Sam and Snowden \cite{SS-Grobner} that for the categories $\cC$ in (i)-(iii) above, every finitely generated graded $\cC$-module over a commuatative noetherian ring is noetherian (and hence they are centrally stable by Corollary \ref{noetherian case}).

\section{{\it d}-step central stability} \label{d-step section}

\subsection{Ideal of relations}
Suppose $V$ is a finitely generated $\FI$-module over a commutative noetherian ring. The isomorphism \eqref{colimit} says that, provided $N$ is sufficiently large, $V$ can be recovered from its restriction to the full subcategory of $\FI$ on the set of objects $\{S \in \Ob(\FI) \mid |S| \leqslant N\}$; using the isomorphism \eqref{central stabilization on lhs}, we see that in fact we can recover $V(S)$ for $|S|>N$ using only the restriction of $V$ to the full subcategory of $\FI$ on the set of objects $\{S \in \Ob(\FI) \mid N-1\leqslant |S| \leqslant N\}$. The purpose of the present section and the next is to show that this is a consequence of the quadratic property of $\FI$. More generally, we shall prove that if the ideal of relations of a category algebra $A$ is generated in degrees $\leqslant d$ (in the sense of Definition \ref{relations degree definition} below), then any graded $A$-module which is presented in finite degrees is $d$-step centrally stable.

Let $\cA$ be an $R$-linear category such that $\Ob(\cA)=\Z_+$ and $\Hom_{\cA}(m,n)=0$ if $m>n$. Denote by $A$ the category algebra of $\cA$; see \eqref{category algebra}. For any $m\in\Z_+$, let
\begin{equation*}
\wA(m,m) := \End_{\cA} (m);
\end{equation*}
for any $n\geqslant m+1$, let
\begin{equation*}
\wA(m,n) := \Hom_{\cA}(n-1,n) \otimes_{\End_{\cA}(n-1)} \cdots \otimes_{\End_{\cA}(m+1)} \Hom_{\cA}(m,m+1).
\end{equation*}
We define a $R$-linear category $\wA$ with $\Ob(\wA)=\Z_+$ by $\Hom_{\wA}(m,n)=\wA(m,n)$ if $m\leqslant n$, and $\Hom_{\wA}(m,n)=0$ if $m>n$. There is a natural $R$-linear functor from $\wA$ to $\cA$ which is the identity map on the set of objects. For any $n\geqslant m$, let $\wI(m,n)$ be the kernel of the map $\wA(m,n) \to \Hom_{\cA}(m,n)$. One has $\wI(m,n)=0$ whenever $n$ is $m$ or $m+1$.

\begin{definition} \label{relations degree definition}
Let $d$ be an integer $\geqslant 1$. We say that \emph{the ideal of relations of $A$ is generated in degrees $\leqslant d$} if, whenever $n\geqslant m+2$, the map $\wA(m,n) \to \Hom_{\cA}(m,n)$ is surjective, and whenever $n\geqslant m+d$, one has:
\begin{equation}\label{ideal}
\wI(m,n) = \sum_{r=m}^{n-d}  \wA(r+d, n)\otimes_{\End_{\cA}(r+d)} \wI(r,r+d)\otimes_{\End_{\cA}(r)} \wA(m,r).
\end{equation}
\end{definition}

\begin{remark}
If $d=1$, so that the ideal of relations of $A$ is generated in degrees $\leqslant 1$, then $\wI(m,n)=0$ for every $m$ and $n$, and so $\wA$ and $\cA$ are the same. In most interesting examples, one has $d\geqslant 2$.
\end{remark}

\begin{remark}
If $d=2$, so that the ideal of relation of $A$ is generated in degrees $\leqslant 2$, then $A$ is a quadratic algebra whose degree $k$ component is $\displaystyle{\bigoplus_{m\in \Z_+}} \Hom_{\cA}(m,m+k)$ for each $k\in \Z_+$. Many combinatorial categories such as $\FI_a$, $\OI_a$ and $\FS^{\op}$ are quadratic; see Section \ref{last section} below.
\end{remark}

\begin{example} \label{plactic monoid}
Fix a finite totally ordered set $\Omega$. Recall (see \cite{LLT}) that the plactic monoid $M$ on $\Omega$ is the monoid generated by $\Omega$ with defining relations
\begin{gather*}
xzy = zxy \quad \mbox{ if } x\leqslant y <z,\\
yxz = yzx \quad \mbox{ if } x< y\leqslant z.
\end{gather*}
An element $w\in M$ is said to be of length $\ell(w)=n$ if $w$ is a product of $n$ elements of $\Omega$; it is clear that $\ell(w)$ is well-defined. Now define $\cC$ to be the category with $\Ob(\cC)=\Z_+$ and
\begin{equation*}
\Hom_{\cC}(m,n) = \{ w\in M \mid \ell(w) = n-m \}.
\end{equation*}
The composition of morphisms in $\cC$ is given by the product in $M$. Then the category algebra $A_{\cC}$ is not quadratic but has ideal of relations generated in degrees $\leqslant 3$.
\end{example}

It is plain that the preceding example can be generalized to any monoid with a presentation whose defining relations do not change the length of words.

\subsection{{\it d}-step central stability}

The following theorem is the second main result of this paper.

\begin{theorem} \label{d-step centrally stable}
Let $d$ be an integer $\geqslant 1$. Suppose the ideal of relations of $A$ is generated in degrees $\leqslant d$. If a graded $A$-module $V$ is presented in finite degrees, then $V$ is $d$-step centrally stable.
\end{theorem}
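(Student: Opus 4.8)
The plan is to reduce the statement to the case of a representable module $Ae_k$, and then to analyse $Ae_k$ directly using the presentation \eqref{ideal} of the ideal of relations. Throughout write $a = N-(d-1)$ and $e = e_{a,N}$.

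First, the reductions. By Definition \ref{definition of d-step centrally stable}, $V$ being $d$-step centrally stable for a given $N$ means exactly that the natural map $\eta_V \colon Ae\otimes_{eAe} eV \to \bigoplus_{n\geqslant a} e_nV$ is an isomorphism. Now $Ae\otimes_{eAe}e(-)$ is right exact, the truncation $\bigoplus_{n\geqslant a}e_n(-)$ is exact, and $\eta$ is a natural transformation between them; hence the diagram chase of Lemma \ref{main lemma} shows that the class $\mathcal{E}_N$ of graded $A$-modules on which $\eta$ is an isomorphism is closed under arbitrary direct sums and under cokernels. Moreover every $Ae_i$ with $a\leqslant i\leqslant N$ lies in $\mathcal{E}_N$, being a direct summand of $Ae$. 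If $V$ is presented in finite degrees and $N' := \prd(V)$, then by \eqref{presentation in finite degree exact sequence} (with $N'$ in place of $N$) the module $V$ is a cokernel of a map between direct sums of copies of $Ae_{0,N'} = Ae_0\oplus\cdots\oplus Ae_{N'}$. It therefore suffices to prove that for each fixed $k$ the module $Ae_k$ lies in $\mathcal{E}_N$ for all sufficiently large $N$; one then picks $N$ large enough to serve $k = 0, \ldots, N'$ simultaneously, which also puts $Ae_0, \ldots, Ae_{N'}$, and hence $V$, into $\mathcal{E}_N$.

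So fix $k$ and take $N\geqslant k+d$, so that $k\leqslant a-1$; put $b = N$. The surjectivity requirement in Definition \ref{relations degree definition} yields $\Hom_{\cA}(k,n) = \Hom_{\cA}(a,n)\circ\Hom_{\cA}(k,a)$ (an equality of $R$-spans, automatic for $n=a$) for every $n\geqslant a$, so composition defines a surjection of graded $A$-modules $W' \to \bigoplus_{n\geqslant a} e_nAe_k$, where
\begin{equation*}
W' := Ae_a \otimes_{\End_{\cA}(a)} \Hom_{\cA}(k,a), \qquad e_nW' = \Hom_{\cA}(a,n)\otimes_{\End_{\cA}(a)}\Hom_{\cA}(k,a).
\end{equation*}
Applying the right exact functor $Ae_a\otimes_{\End_{\cA}(a)}(-)$ to a presentation of the $\End_{\cA}(a)$-module $\Hom_{\cA}(k,a)$ exhibits $W'$ as a cokernel of a map between direct sums of copies of $Ae_a$, so $W'\in\mathcal{E}_N$. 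Using $\wA(k,n) = \wA(a,n)\otimes_{\End_{\cA}(a)}\wA(k,a)$ one identifies $e_nW' = \wA(k,n)/J'_n$, where $J'_n := \wI(a,n)\cdot\wA(k,a) + \wA(a,n)\cdot\wI(k,a)$ (compositions taken inside $\wA(k,n)$); since $J'_n\subseteq\wI(k,n)$, the kernel $K''$ of $W'\to\bigoplus_{n\geqslant a}e_nAe_k$ satisfies $e_nK'' = \wI(k,n)/J'_n$.

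The crux is to show $K'' = A\cdot eK''$. By \eqref{ideal}, for $n\geqslant k+d$ one has $\wI(k,n) = \sum_{r=k}^{n-d}\wA(r+d,n)\cdot\wI(r,r+d)\cdot\wA(k,r)$. If $r\geqslant a$, then writing $\wA(k,r) = \wA(a,r)\circ\wA(k,a)$ the $r$-th summand lies in $\bigl(\wA(r+d,n)\circ\wI(r,r+d)\circ\wA(a,r)\bigr)\circ\wA(k,a) \subseteq \wI(a,n)\cdot\wA(k,a)\subseteq J'_n$, since any composite with a factor from $\wI$ maps to $0$ in $\cA$. If $r+d\leqslant a$, then writing $\wA(r+d,n) = \wA(a,n)\circ\wA(r+d,a)$ the $r$-th summand lies in $\wA(a,n)\circ\bigl(\wA(r+d,a)\circ\wI(r,r+d)\circ\wA(k,r)\bigr) \subseteq \wA(a,n)\cdot\wI(k,a)\subseteq J'_n$. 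The remaining summands have $r\in\{a-d+1, \ldots, a-1\}$, hence $r+d\in\{a+1,\ldots,b\}\subseteq[a,b]$; for such $r$ we have $\wI(r,r+d)\cdot\wA(k,r)\subseteq\wI(k,r+d)$, so the $r$-th summand lies in $\wA(r+d,n)\cdot\wI(k,r+d)$, whose image in $e_nW'$ lies in $A\cdot e_{r+d}K''\subseteq A\cdot eK''$, because the $A$-action on $W'$ is given by composition on the $Ae_a$ factor and $r+d$ is in the window. Thus $e_nK''\subseteq A\cdot eK''$ for every $n$, so $K'' = A\cdot eK''$. Choosing a surjection $P''\to K''$ with $P''$ a direct sum of modules $Ae_i$, $a\leqslant i\leqslant b$, we obtain an exact sequence $P''\to W'\to\bigoplus_{n\geqslant a}e_nAe_k\to 0$ with $P'', W'\in\mathcal{E}_N$, hence $\bigoplus_{n\geqslant a}e_nAe_k\in\mathcal{E}_N$; since $e\bigl(\bigoplus_{n\geqslant a}e_nAe_k\bigr) = e(Ae_k)$, this says $Ae_k$ is $d$-step centrally stable, completing the reduction. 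The step I expect to be the main obstacle is precisely this last paragraph — keeping track of which summands of \eqref{ideal} are absorbed into $J'_n$ and verifying that the surviving ones sit in window degrees; when $d=2$ only the single value $r = a-1$ survives, which is exactly the phenomenon behind \eqref{central stabilization on lhs} for $\FI$.
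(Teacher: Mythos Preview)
Your argument is correct, but it follows a genuinely different route from the paper's proof. The paper does \emph{not} reduce to representable modules; instead it proves a ``one-step shrinking'' lemma (Lemma~\ref{reducing idempotent}): for any graded $A$-module $V$ and any $n>N\geqslant m+d$, one has $e_nAf\otimes_{fAf}fV\cong e_nAe\otimes_{eAe}eV$ with $e=e_{m,N}$ and $f=e_{m+1,N}$, the inverse map being built directly from \eqref{ideal}. Iterating this from $m=0$ up to $m=N-d$ reduces the window $e_{0,N}$ to $e_{N-(d-1),N}$, and Theorem~\ref{general centrally stable} supplies the starting isomorphism. Your proof instead first reduces to $V=Ae_k$, then manufactures a two-term resolution $P''\to W'\to \bigoplus_{n\geqslant a}e_nAe_k\to 0$ with $P'',W'$ built from window projectives, using \eqref{ideal} to control where the kernel $K''$ is generated. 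Both arguments extract the same content from \eqref{ideal} (the summands with $r+d$ in the window are the only ones that survive), but the paper's packaging is more modular and yields the slightly sharper range $N\geqslant\max(\prd(V),d-1)$, whereas your reduction needs $N\geqslant\prd(V)+d$. On the other hand, your approach makes explicit that the truncation $\bigoplus_{n\geqslant a}e_nAe_k$ admits a length-two resolution by sums of $Ae_i$ with $i$ in the window, which is a statement of independent interest.

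Two small points worth tightening. First, your conclusion ``$e_nK''\subseteq A\cdot eK''$ for every $n$'' is argued via \eqref{ideal} only for $n\geqslant k+d$; for $a\leqslant n<k+d$ (which can occur when $N<k+2d-1$) you should note that such $n$ lie in the window, so the containment is trivial. Second, the passage from $\bigoplus_{n\geqslant a}e_nAe_k\in\mathcal{E}_N$ to $Ae_k\in\mathcal{E}_N$ uses not only $e(\bigoplus_{n\geqslant a}e_nAe_k)=e(Ae_k)$ but also that $\bigoplus_{n\geqslant a}e_nAe_k$ is an $A$-\emph{submodule} of $Ae_k$ (the grading only goes up), so the two multiplication maps $\eta$ literally coincide; it is worth saying this explicitly.
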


To prove Theorem \ref{d-step centrally stable}, we need the following lemma.

\begin{lemma} \label{reducing idempotent}
Suppose the ideal of relations of $A$ is generated in degrees $\leqslant d$. If $V$ is a graded $A$-module, and $n>N \geqslant m+d$, then
\begin{equation*}
e_n A f \otimes_{fAf} fV \cong e_n A e \otimes_{eAe} eV \quad \mbox{ where } e=e_{m,N},\quad f=e_{m+1,N}.
\end{equation*}
\end{lemma}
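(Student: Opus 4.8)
The plan is to show that the natural inclusion $fAf \hookrightarrow eAe$ induces, after tensoring up, an isomorphism on the graded pieces above degree $N$. First I would unwind what the two sides are: $e = e_{m,N}$ and $f = e_{m+1,N}$ differ only by the summand $e_m$, so $eAe = fAf \oplus (\text{terms involving }e_m)$. The key observation is that the relation-generation hypothesis in degrees $\leqslant d$ forces the object $m$ to be "redundant above degree $N$" in the following sense: because $n > N \geqslant m + d$, every element of $e_n A e$ that factors through $m$ can be rewritten, modulo relations, as a composite $e_n A e_{m+1} \otimes_{\End(m+1)} \Hom_\cA(m, m+1)$, using the surjectivity of $\wA(m,n) \to \Hom_\cA(m,n)$ together with \eqref{ideal}. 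I would make this precise by the isomorphism $e_n A e_m \cong e_n A e_{m+1} \otimes_{\End_\cA(m+1)} \Hom_\cA(m,m+1)$ valid for $n \geqslant m + d$ (since then $n$ is far enough that all relations through $m$ are visible), and similarly $e_m V \cong \Hom_\cA(m,m+1) \otimes_{\End_\cA(m+1)}$-ish description is not available for a general module, so the argument must be run on the $Ae$-side, not on $V$.

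Concretely, I would argue as follows. Write $eV = e_m V \oplus fV$. Then
\[
e_n A e \otimes_{eAe} eV \cong \left( e_n A e \otimes_{eAe} e_m eAe \right) \otimes_{eAe} eV \ \oplus \ \ldots
\]
— this is getting complicated; a cleaner route is to use that $e_n A e \otimes_{eAe} eV$ is the degree-$n$ part of $Ae \otimes_{eAe} eV$, and to identify $Ae \otimes_{eAe} eV$ with $Af \otimes_{fAf} fV$ in degrees $> N$ directly. I would set up the canonical map $Af \otimes_{fAf} fV \to Ae \otimes_{eAe} eV$ (induced by $fV \hookrightarrow eV$ and $Af \hookrightarrow Ae$, using $f eAe = fAf$... one must check $f(Ae)=Af$, which holds since $f e_k = e_k$ for $m+1 \leqslant k \leqslant N$ and $0$ otherwise) and show it is an isomorphism in degree $n$ for $n > N$. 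Surjectivity: every generator $e_n a e \otimes v$ with $v \in e_m V$ can be moved — write $e_n a e_m = \sum e_n b e_{m+1} \cdot \gamma$ with $\gamma \in \Hom_\cA(m,m+1)$ using $n \geqslant m+d$ and surjectivity of $\wA(m,n) \to \Hom_\cA(m,n)$ — so $e_n a e_m \otimes v = \sum e_n b e_{m+1} \otimes \gamma v$ with $\gamma v \in e_{m+1} V \subseteq fV$. Injectivity: the kernel of the map on the $eAe$-tensor side is generated by the relations $a\epsilon \otimes w - a \otimes \epsilon w$ for $\epsilon \in eAe$; I'd check that when we pass to degrees $> N$ the only relations that matter are those with $\epsilon \in fAf$ plus the "straightening" relations $e_n a e_m \otimes \gamma v = e_n a e_m \gamma \otimes v$, and that \eqref{ideal} says precisely that two straightenings of the same element agree — i.e.\ $\wI(m,n)$ for $n \geqslant m+d$ is generated by the degree-$\leqslant d$ relations, which are exactly the relations between two ways of pushing $e_m$ up to $e_{m+1}$ and beyond.

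The step I expect to be the main obstacle is the injectivity/well-definedness: verifying that the relations in $\wI(m,n)$ needed to straighten factorizations through $m$ all come from $\wI(m,m+d)$, which then get absorbed into the $fAf$-action on $Af$ since $m+d \leqslant N$ means $e_{m+d}$ and everything between $m+1$ and $N$ lies in $f$. In other words, the hypothesis $N \geqslant m+d$ is exactly what guarantees that all the relations "generating the ideal through $m$" are already encoded in $fAf$ and in $Af$ as an $fAf$-module, so no information is lost. I would phrase the bookkeeping using the functor $\wA \to \cA$ and the description \eqref{ideal}: the composite $\wA(m+1,n) \otimes_{\End_\cA(m+1)} \Hom_\cA(m,m+1) \to \Hom_\cA(m,n)$ is surjective with kernel generated (over the appropriate endomorphism rings) by $\wI(r,r+d)$ for $m \leqslant r \leqslant n-d$, and each such piece, having target in degree $r+d \leqslant N$ when $r = m$ and automatically $\leqslant N$ for the relevant range once we note the factorization passes through an object $\leqslant N$... actually the subtle point is relations with $r = m$ have "output" at $m+d \leqslant N$ hence live in $fAf \cdot (\text{stuff})$, while relations with $r > m$ live entirely in $\wI$ of the $\wA$-category restricted to objects $\geqslant m+1$, i.e.\ are already accounted for in $Af$ over $fAf$. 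Carefully separating these two cases and checking the $r=m$ relations are killed by the $fAf$-module structure on $Af$ (because $\wI(m,m+d)$ maps into $\ker(\wA(m,m+d)\to\Hom_\cA(m,m+d))$ and $m+d \leqslant N$) is the crux; everything else is diagram chasing with right-exactness of $- \otimes_{eAe} eV$ and $- \otimes_{fAf} fV$ in the spirit of Lemma \ref{main lemma}.
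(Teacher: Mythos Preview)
Your proposal is correct and follows essentially the same approach as the paper: set up the natural map $\Phi: e_nAf\otimes_{fAf}fV \to e_nAe\otimes_{eAe}eV$, and use the factorization $e_nAe_m \twoheadleftarrow \Hom_\cA(m{+}1,n)\otimes_{\End_\cA(m{+}1)}\Hom_\cA(m,m{+}1)$ together with the fact that $\Hom_\cA(m{+}1,m{+}d)\subset fAf$ (since $N\geqslant m{+}d$) to absorb the $e_m$-part. The only packaging difference is that the paper builds an explicit inverse $\Psi$ (defining it on $e_nAe_m\otimes e_mV$ via your ``straightening'' map and checking it is well-defined by showing the composite with the inclusion of $\wI(m,n)$ vanishes), whereas you aim to prove surjectivity and injectivity separately; your injectivity/well-definedness check \emph{is} the construction of $\Psi$, so the two arguments converge at the same point, and framing it as an inverse from the start avoids having to reason about kernels of tensor-product maps.
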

\begin{proof}
Observe that
\begin{equation*}
e_n A e = e_n A e_m \oplus e_n A f, \qquad eV = e_m V \oplus fV.
\end{equation*}
We have a natural map
\begin{equation*}
\Phi: e_n A f \otimes_{fAf} fV \to e_n A e \otimes_{eAe} eV.
\end{equation*}
We shall construct a map $\Psi$ inverse to $\Phi$. First, define the maps
\begin{equation*}
\xymatrix{
\wI(m, n) \otimes_{R} e_m V \ar[d]^{\mu} & \\
\Hom_{\cA}(m+1,n)\otimes_{\End_{\cA}(m+1)} \Hom_{\cA}(m,m+1) \otimes_{R} e_m V  \ar[r]^{\hspace{1.2in}\widetilde{\theta}} \ar[d]^{\nu} & e_n A f \otimes_{fAf} fV \\
\Hom_{\cA}(m,n) \otimes_{R} e_m V &
}
\end{equation*}
where $\mu$ and $\nu$ are the obvious maps defined using composition of morphisms, and $\widetilde{\theta}$ is defined by $\widetilde{\theta}(\alpha\otimes \beta\otimes x) = \alpha \otimes \beta x$. The map $\nu$ is surjective and its kernel is the image of $\mu$. Using (\ref{ideal}) and $\Hom_{\cA}(m+1,m+d) \subset fAf$, we see that $\widetilde{\theta}\mu=0$. Therefore, $\widetilde{\theta}$ factors uniquely through $\nu$ to give a map
\begin{equation*}
\theta : \Hom_{\cA}(m,n) \otimes_{R} e_m V \to e_n A f \otimes_{fAf} fV.
\end{equation*}

Now define
\begin{equation*}
\widetilde{\Psi} : e_n A e \otimes_{R} eV \to A f \otimes_{fAf} fV
\end{equation*}
by
\begin{equation*}
\widetilde{\Psi} (\alpha \otimes x) = \left\{ \begin{array}{ll}
\theta(\alpha\otimes x) & \mbox{ if } \alpha\in e_nAe_m \mbox{ and } x\in e_m V,\\
\alpha\otimes  x & \mbox{ if } \alpha\in e_n A f \mbox{ and } x\in fV,\\
0 & \mbox{ otherwise }.
\end{array} \right.
\end{equation*}
It is plain that $\widetilde{\Psi}$ descends to a map
\begin{equation*}
\Psi : e_n A e \otimes_{eAe} eV \to A f \otimes_{fAf} fV,
\end{equation*}
and $\Psi$ is an inverse to $\Phi$.
\end{proof}

We can now prove Theorem \ref{d-step centrally stable}.

\begin{proof}[Proof of Theorem \ref{d-step centrally stable}]
By Theorem \ref{general centrally stable}, for all $N$ sufficiently large, one has
\begin{equation*}
  A e_{0,N} \otimes_{e_{0,N}Ae_{0,N}} e_{0,N}V \cong  V;
\end{equation*}
in particular, for each $n\in \Z_+$, one has $e_n A e_{0,N} \otimes_{e_{0,N}Ae_{0,N}} e_{0,N}V \cong  e_n V$.

If $n>N\geqslant d-1$, then by Lemma \ref{reducing idempotent},
\begin{align*}
e_n A e_{N-(d-1),N} \otimes_{e_{N-(d-1),N}Ae_{N-(d-1),N}} e_{N-(d-1),N}V
&\cong e_n A e_{N-d,N} \otimes_{e_{N-d,N}Ae_{N-d,N}} e_{N-d,N}V\\
&\hspace{6pt}\vdots\\
&\cong e_n A e_{0,N} \otimes_{e_{0,N}Ae_{0,N}} e_{0,N}V
\end{align*}

If $N-(d-1) \leqslant n \leqslant N$, the map
\begin{equation*}
e_n A e_{N-(d-1),N} \otimes_{e_{N-(d-1),N}Ae_{N-(d-1),N}} e_{N-(d-1),N}V \to e_n V, \quad \alpha\otimes x \mapsto \alpha x
\end{equation*}
has an inverse defined by $x \mapsto e_n \otimes x$.
\end{proof}

\section{Sufficiency conditions} \label{last section}

\subsection{Sufficiency conditions}
To apply Theorem \ref{d-step centrally stable}, we need to be able to check if the ideal of relations of $A$ is generated in degrees $\leqslant d$. In this section, we provide sufficiency conditions which allow one to check this.

Let $\cC$ be a small category such that $\Ob(\cC)=\Z_+$, and $\Hom_{\cC}(m,n)=\emptyset$ if $m>n$. Recall that $\cA_{\cC}$ is the $R$-linear category with $\Ob(\cA_{\cC})=\Z_+$ and $\Hom_{\cA_{\cC}}(m,n)$ the free $R$-module with basis $\Hom_{\cC}(m,n)$ for each $m,n\in \Z_+$. Let $A_{\cC}$ be the category algebra of $\cA_{\cC}$; see \eqref{category algebra}.

\begin{definition}
We say that the \emph{ideal of relations of $\cC$ is generated in degrees $\leqslant d$} if the ideal of relations of $A_{\cC}$ is generated in degrees $\leqslant d$.
\end{definition}

\begin{proposition} \label{combinatorial condition}
Suppose $d\geqslant 2$. The ideal of relations of $\cC$ is generated in degrees $\leqslant d$ if the following two conditions are satisfied:

(i) The composition map $\Hom_{\cC}(l, n) \times \Hom_{\cC}(m,l)\to \Hom_{\cC}(m,n)$ is surjective whenever $m<l<n$.

(ii) For every $\alpha_1, \alpha_2 \in \Hom_{\cC}(m+1,n)$ and $\beta_1, \beta_2\in \Hom_{\cC}(m,m+1)$ satisfying
\begin{equation*}
\alpha_1\beta_1 = \alpha_2\beta_2 \quad \mbox{ and } \quad n>m+d,
\end{equation*}
there exists $\gamma\in \Hom_{\cC}(m+d,n)$ and $\delta_1, \delta_2 \in Hom_{\cC}(m+1,m+d)$ such that the following diagram commutes:
\begin{equation}  \label{combinatorial condition diagram}
\xymatrix{
 & m+1 \ar[dr]_{\delta_1} \ar[drrrr]^{\alpha_1} & & & & \\
m \ar[ur]^{\beta_1} \ar[dr]_{\beta_2} & & m+d  \ar[rrr]^{\hspace{-1cm}\gamma} & & & n \\
 & m+1 \ar[ur]^{\delta_2} \ar[urrrr]_{\alpha_2} & & & & \\
}
\end{equation}
\end{proposition}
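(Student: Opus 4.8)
The plan is to check the two requirements in Definition~\ref{relations degree definition} separately. Write $\pi_{m,n}\colon\wA(m,n)\to\Hom_{\cA}(m,n)$ for the structure map, so $\wI(m,n)=\ker\pi_{m,n}$. The surjectivity of $\pi_{m,n}$ for $n\geqslant m+2$ uses only~(i): applying~(i) with $l=m+1$ repeatedly factors any morphism $m\to n$ of $\cC$ as a composite of $n-m$ morphisms each raising degree by one, i.e.\ as $\pi_{m,n}$ of a pure tensor; hence $\pi_{a,b}$ is surjective whenever $b\geqslant a+1$, and we freely lift morphisms of $\cC$ to $\wA$ along these maps below. For~\eqref{ideal}, read the right-hand side as the image in $\wA(m,n)$ of the concatenation map out of $\bigoplus_{r=m}^{n-d}\wA(r+d,n)\otimes_{\End_{\cA}(r+d)}\wI(r,r+d)\otimes_{\End_{\cA}(r)}\wA(m,r)$; the inclusion ``$\supseteq$'' is immediate since $\wA\to\cA$ carries concatenation to composition in $\cA$, which kills the middle factor. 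For ``$\subseteq$'' one inducts on $n-m$, the case $n=m+d$ being trivial. Writing $X\cdot Y$ for the image in $\wA$ of the relevant concatenation map, the decompositions $\wA(m,n)\cong\wA(n-1,n)\otimes_{\End_{\cA}(n-1)}\wA(m,n-1)\cong\wA(m+1,n)\otimes_{\End_{\cA}(m+1)}\wA(m,m+1)$ together with right-exactness of $\otimes$ (so that only surjectivity of the relevant $\pi$, not flatness, is needed) show that~\eqref{ideal} for $\wI(m,n)$ follows from the inductive hypothesis for $\wI(m,n-1)$ and $\wI(m+1,n)$ once we establish
\[
\wI(m,n)\;=\;\wA(n-1,n)\cdot\wI(m,n-1)\;+\;\wI(m+1,n)\cdot\wA(m,m+1)\qquad(n\geqslant m+d+1).
\]
So everything reduces to this identity.

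Both summands on the right lie in $\wI(m,n)$, again by the ``$\supseteq$''-argument. For the reverse inclusion, apply $\pi_{m+1,n}\otimes\mathrm{id}$ to the second decomposition above: by right-exactness its kernel is exactly $\wI(m+1,n)\cdot\wA(m,m+1)$, and since $\pi_{m+1,n}$ is surjective (here $n\geqslant m+3$ because $d\geqslant 2$) this identifies $\wI(m,n)\big/\bigl(\wI(m+1,n)\cdot\wA(m,m+1)\bigr)$ with the kernel of the composition map
\[
c\colon\ \Hom_{\cA}(m+1,n)\otimes_{\End_{\cA}(m+1)}\Hom_{\cA}(m,m+1)\longrightarrow\Hom_{\cA}(m,n).
\]
It therefore suffices to show that $\ker c$ lies in the image under $\pi_{m+1,n}\otimes\mathrm{id}$ of $\wA(n-1,n)\cdot\wI(m,n-1)$. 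Now the source of $c$ is the free $R$-module on the set $B=\Hom_{\cC}(m+1,n)\otimes_{\End_{\cC}(m+1)}\Hom_{\cC}(m,m+1)$, and $c$ is induced by the set map $B\to\Hom_{\cC}(m,n)$ sending the class of $(\alpha,\beta)$ to $\alpha\beta$, which is surjective by~(i). By the standard description of the kernel of a linear map induced by a surjection of sets, $\ker c$ is $R$-spanned by the elements $\alpha_1\otimes\beta_1-\alpha_2\otimes\beta_2$ with $\alpha_i\in\Hom_{\cC}(m+1,n)$, $\beta_i\in\Hom_{\cC}(m,m+1)$ and $\alpha_1\beta_1=\alpha_2\beta_2$.

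This is precisely where condition~(ii) enters, and it is the crux. Given such a generator --- note $n>m+d$, so~(ii) applies --- choose $\gamma\in\Hom_{\cC}(m+d,n)$ and $\delta_1,\delta_2\in\Hom_{\cC}(m+1,m+d)$ as in~\eqref{combinatorial condition diagram}, so $\gamma\delta_i=\alpha_i$ and $\delta_1\beta_1=\delta_2\beta_2$. Lift $\gamma$ and the $\delta_i$ to $\widetilde{\gamma}\in\wA(m+d,n)$ and $\widetilde{\delta}_i\in\wA(m+1,m+d)$ (possible by the first paragraph, using $d\geqslant 2$). Then $\widetilde{\delta}_1\otimes\beta_1-\widetilde{\delta}_2\otimes\beta_2$ maps to $\delta_1\beta_1-\delta_2\beta_2=0$ in $\Hom_{\cA}(m,m+d)$, so it lies in $\wI(m,m+d)$; hence $\widetilde{\gamma}\otimes(\widetilde{\delta}_1\otimes\beta_1-\widetilde{\delta}_2\otimes\beta_2)$ concatenates to an element of $\wA(m+d,n)\cdot\wI(m,m+d)$, which sits inside $\wA(n-1,n)\cdot\wI(m,n-1)$ since $\wA(m+d,n)=\wA(n-1,n)\cdot\wA(m+d,n-1)$ and $\wA(m+d,n-1)\cdot\wI(m,m+d)\subseteq\wI(m,n-1)$. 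Regrouping this element inside $\wA(m+1,n)\otimes_{\End_{\cA}(m+1)}\wA(m,m+1)$ and applying $\pi_{m+1,n}\otimes\mathrm{id}$ returns exactly $\alpha_1\otimes\beta_1-\alpha_2\otimes\beta_2$. So every generator of $\ker c$ is hit, which closes the induction.

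The main obstacle is not conceptual --- there is no single difficult step --- but the argument requires careful bookkeeping of the iterated tensor products over the endomorphism rings: in particular, reading the sum in~\eqref{ideal} as the image of a concatenation map (so that right-exactness of $\otimes$, rather than any flatness, is all one needs), and nailing down the free-$R$-module structure of $\Hom_{\cA}(m+1,n)\otimes_{\End_{\cA}(m+1)}\Hom_{\cA}(m,m+1)$ together with the kernel of its induced map to $\Hom_{\cA}(m,n)$. Condition~(ii) is the only genuinely new input: it says exactly that any length-two relation $\alpha_1\beta_1=\alpha_2\beta_2$, with the $\beta_i$ degree-one morphisms out of $m$, can be routed through a single width-$d$ relation in $\wI(m,m+d)$ followed by a common tail $\gamma$ --- which is precisely what each step of the induction consumes.
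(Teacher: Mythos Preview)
Your argument is correct and follows essentially the same inductive strategy as the paper's proof. The one organizational difference is worth noting: the paper first invokes (citing \cite[Lemma~6.1]{Li}) that $\wI(m,n)$ is $R$-spanned by pure-tensor differences $\xi_{n-1}\otimes\cdots\otimes\xi_m-\xi'_{n-1}\otimes\cdots\otimes\xi'_m$, and then uses condition~(ii) to decompose each such generator directly into two pieces in $\wI(m+1,n)\cdot\wA(m,m+1)$ plus one piece in $\wA(m+d,n)\cdot\wI(m,m+d)$, applying induction only to $\wI(m+1,n)$. You instead first quotient by $\wI(m+1,n)\cdot\wA(m,m+1)$, identify the quotient with $\ker c$, and analyze the more immediately visible spanning set there; this sidesteps the external lemma at the cost of a second inductive call on $\wI(m,n-1)$. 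The use of condition~(ii) is identical in both versions, and the two arguments are interchangeable.
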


\begin{proof}
Condition (i) implies that $\wA_{\cC} (m,n)\to \Hom_{\cA_{\cC}} (m,n)$ is surjective if $n\geqslant m+2$. We shall prove, by induction on $n-m$, that (\ref{ideal}) holds. The case $n-m=d$ is trivial. Now suppose $n-m>d$.
Let
\begin{equation*}
\wA_{\cC}(m,n) \stackrel{\pi_1}{\longrightarrow} \Hom_{\cA_{\cC}}(m+1,n)\otimes_{\End_{\cA_{\cC}}(m+1)}  \Hom_{\cA_{\cC}} (m,m+1)
\stackrel{\pi_2}{\longrightarrow} \Hom_{\cA_{\cC}}(m,n)
\end{equation*}
be the obvious maps defined by composition of morphisms. It is easy to see (and can be proved in the same way as \cite[Lemma 6.1]{Li}) that $\wI(m,n)$ is spanned over $R$ by elements of the form
\begin{equation} \label{element in kernel}
\xi_{n-1}\otimes \cdots\otimes \xi_m - \xi'_{n-1}\otimes \cdots\otimes \xi'_m
\end{equation}
such that $\xi_i, \xi'_i \in \Hom_{\cC}(i,i+1)$ for $i=m,\ldots, n-1$ and $\xi_{n-1}\cdots\xi_m = \xi'_{n-1}\cdots\xi'_m$. By condition (ii), there exists $\gamma\in \Hom_{\cC}(m+d,n)$ and $\delta_1, \delta_2 \in \Hom_{\cC}(m+1,m+d)$ such that
\begin{equation*}
\xi_{n-1}\cdots\xi_{m+1}=\gamma \delta_1, \qquad \xi'_{n-1}\cdots\xi'_{m+1}=\gamma\delta_2, \qquad \delta_1\xi_m=\delta_2\xi'_m.
\end{equation*}
We can choose
\begin{equation*}
\wgamma = \wgamma_{n-1}\otimes \cdots \otimes \wgamma_{m+d}\in \wA_{\cC}(m+d,n),
\end{equation*}
where $\wgamma_i \in \Hom_{\cC}(i,i+1)$  for $i=m+d,\ldots, n-1$, such that $\wgamma_{n-1} \cdots\wgamma_{m+d} = \gamma$. We can also choose
\begin{equation*}
\wdelta  = \wdelta_{m+d-1}\otimes \cdots \otimes \wdelta_{m+1} \in \wA_{\cC}(m+1,m+d),
\end{equation*}
where $ \wdelta_i \in \Hom_{\cC}(i,i+1)$ for $i=m+1,\ldots, m+d-1$, such that $\wdelta_{m+d-1} \cdots\wdelta_{m+1} = \delta_1$. Similarly, choose
\begin{equation*}
\wdelta'  = \wdelta'_{m+d-1}\otimes \cdots \otimes \wdelta'_{m+1} \in \wA_{\cC}(m+1,m+d),
\end{equation*}
where $ \wdelta'_i \in \Hom_{\cC}(i,i+1)$ for $i=m+1,\ldots, m+d-1$, such that $\wdelta'_{m+d-1} \cdots\wdelta'_{m+1} = \delta_2$.
The element in (\ref{element in kernel}) can be written as:
\begin{equation*}
( \xi_{n-1}\otimes \cdots \otimes \xi_{m+1} - \wgamma\otimes \wdelta   )\otimes \xi_m
 - ( \xi'_{n-1}\otimes \cdots \otimes \xi'_{m+1} - \wgamma\otimes \wdelta'   )\otimes \xi'_m
 + \wgamma \otimes (\wdelta \otimes \xi_m - \wdelta' \otimes \xi'_m).
\end{equation*}
Since
\begin{gather*}
\xi_{n-1}\otimes \cdots \otimes \xi_{m+1} - \wgamma\otimes \wdelta \in \wI(m+1, n),\\
\xi'_{n-1}\otimes \cdots \otimes \xi'_{m+1} - \wgamma\otimes \wdelta'  \in \wI(m+1,n),\\
\wdelta \otimes \xi_m - \wdelta' \otimes \xi'_m\in \wI(m,m+d),
\end{gather*}
the result follows by induction.
\end{proof}

\begin{corollary} \label{sufficiency conditions}
Suppose that $\cC$ satisfies the two conditions in Proposition \ref{combinatorial condition} for some $d\geqslant 2$. If a $\cC$-module $V$ is presented in finite degrees, then $V$ is $d$-step centrally stable.
\end{corollary}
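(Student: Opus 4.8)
The plan is to derive Corollary \ref{sufficiency conditions} as a near-immediate consequence of the two main results already in hand, namely Theorem \ref{d-step centrally stable} and Proposition \ref{combinatorial condition}. The strategy has two steps, and since both heavy pieces of machinery are assumed, there is essentially no new obstacle — the only thing to check is that the hypotheses line up.

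\medskip\noindent\emph{Step 1: Pass from the combinatorial conditions on $\cC$ to the algebraic condition on $A_{\cC}$.} Suppose $\cC$ satisfies conditions (i) and (ii) of Proposition \ref{combinatorial condition} for some $d\geqslant 2$. By Proposition \ref{combinatorial condition}, the ideal of relations of $\cC$ is generated in degrees $\leqslant d$; by definition this means that the ideal of relations of the category algebra $A_{\cC}$ is generated in degrees $\leqslant d$ in the sense of Definition \ref{relations degree definition}.

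\medskip\noindent\emph{Step 2: Apply the $d$-step central stability theorem.} Let $V$ be a $\cC$-module over $R$ that is presented in finite degrees; equivalently, by the equivalence between $\cC$-modules and graded $A_{\cC}$-modules recalled in Section \ref{generalities}, the graded $A_{\cC}$-module $\bigoplus_{n\in\Ob(\cC)} V_n$ is presented in finite degrees. Since the ideal of relations of $A_{\cC}$ is generated in degrees $\leqslant d$ by Step 1, Theorem \ref{d-step centrally stable} applies and shows that this graded $A_{\cC}$-module is $d$-step centrally stable. Unwinding the dictionary once more, $V$ is $d$-step centrally stable as a $\cC$-module. (If one wishes, Proposition \ref{colimit formulation of central stability} rephrases this as the colimit statement $V_n \cong \colim_{\alpha: s\to n,\ N-(d-1)\leqslant s\leqslant N} V_s$ for all sufficiently large $N$ and all $n\geqslant N-(d-1)$.)

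\medskip\noindent There is really no hard part here: the corollary is the composite of two results proved earlier in the paper, and the proof is just ``apply Proposition \ref{combinatorial condition}, then apply Theorem \ref{d-step centrally stable}.'' The only point requiring a moment's care — and it is purely bookkeeping — is that the notion of ``presented in finite degrees'' and ``$d$-step centrally stable'' for a $\cC$-module are \emph{defined} to be the corresponding notions for the associated graded $A_{\cC}$-module, so that the hypotheses and conclusions of the two cited results transfer verbatim.

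\begin{proof}
By Proposition \ref{combinatorial condition}, the ideal of relations of $\cC$ is generated in degrees $\leqslant d$; that is, the ideal of relations of the category algebra $A=A_{\cC}$ is generated in degrees $\leqslant d$. Since $V$ is presented in finite degrees, so is the associated graded $A$-module $\bigoplus_{n\in\Ob(\cC)} V_n$. By Theorem \ref{d-step centrally stable}, this graded $A$-module is $d$-step centrally stable, and hence so is $V$.
\end{proof}
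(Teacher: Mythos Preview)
Your proof is correct and matches the paper's own argument essentially verbatim: apply Proposition \ref{combinatorial condition} to conclude that the ideal of relations of $\cC$ is generated in degrees $\leqslant d$, then invoke Theorem \ref{d-step centrally stable}. The additional bookkeeping you spell out about the equivalence between $\cC$-modules and graded $A_{\cC}$-modules is fine but not strictly necessary, since the paper has already declared that a $\cC$-module has a given property precisely when the associated graded $A_{\cC}$-module does.
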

\begin{proof}
By Proposition \ref{combinatorial condition}, the ideal of relations of $\cC$ is generated in degrees $\leqslant d$. Hence, we may apply Theorem \ref{d-step centrally stable}.
\end{proof}

\begin{remark}
Suppose that $\cC$ satisfies condition (i) in Proposition \ref{combinatorial condition}, and its ideal of relations is generated in degrees $\leqslant d$. In this case, condition (ii) might not hold, so it is not a necessary condition. For example, suppose that $d=2$, and one has
\begin{gather*}
\Hom_{\cC}(0,1)=\{ \beta_1, \beta_2, \beta_3 \},\quad
\Hom_{\cC}(1,2)=\{ \beta'_1, \beta'_2, \beta'_3, \beta'_4 \},\quad
\Hom_{\cC}(2,3)=\{ \beta''_1, \beta''_2\},
\end{gather*}
with the defining relations
\begin{gather*}
\beta'_1 \beta_1 = \beta'_3 \beta_3,\quad
\beta'_2 \beta_2 = \beta'_4 \beta_3,\quad
\beta''_1 \beta'_3 = \beta''_2 \beta'_4,
\end{gather*}
as depicted in the following diagram:
\begin{equation*}
\xymatrix{
 && 1 \ar[rr]^{\beta'_1} && 2 \ar[drr]^{\beta''_1} &&  \\
0 \ar[urr]^{\beta_1} \ar[drr]_{\beta_2} \ar[rr]^{\hspace{5mm}\beta_3} && 1 \ar[urr]^{\beta'_3} \ar[drr]_{\beta'_4} &&  &&  3 \\
 && 1 \ar[rr]_{\beta'_2} && 2 \ar[urr]_{\beta''_2} && \\
}
\end{equation*}
Let $\alpha_1 = \beta''_1\beta'_1$ and $\alpha_2 = \beta''_2\beta'_2$. Then one has
\begin{equation*}
\alpha_1 \beta_1 = \beta''_1\beta'_1 \beta_1 = \beta''_1 \beta'_3\beta_3 = \beta''_2\beta'_4\beta_3 = \beta''_2\beta'_2\beta_2 = \alpha_2 \beta_2.
\end{equation*}
On the other hand, it is easy to see that there do not exist $\gamma$, $\delta_1$ and $\delta_2$ such that the diagram in \eqref{combinatorial condition diagram} (with $d=2$, $m=0$, $n=3$) commutes.
\end{remark}

\subsection{Examples}
Using Proposition \ref{combinatorial condition}, one can easily check, for example, that (a skeleton of) $\FI$ is quadratic. Let $\cC$ be the full subcategory of $\FI$ on the set of objects $\{[n] \mid n\in\Z_+\}$. It is easy to see that $\cC$ satisfies condition (i) in Proposition \ref{combinatorial condition}. We need to verify condition (ii). Thus, suppose that we have injective maps:
\begin{equation*}
\alpha_1, \alpha_2 : [m+1] \longrightarrow [n] \quad \mbox{ and } \quad \beta_1, \beta_2 : [m] \longrightarrow [m+1],
\end{equation*}
such that $\alpha_1 \circ \beta_1 = \alpha_2 \circ \beta_2$ and $n\geqslant m+2$. Since
\begin{equation*}
|\im(\alpha_1)\cup\im(\alpha_2)| = |\im(\alpha_1)| + |\im(\alpha_2)| - |\im(\alpha_1)\cap \im(\alpha_2)| \leqslant (m+1) + (m+1) - m = m+2,
\end{equation*}
we can choose $S\subset [n]$ such that $\im(\alpha_1)\cup\im(\alpha_2)\subset S$ and $|S|=m+2$. Let $\gamma: [m+2]\to [n]$ be any injective map whose image is $S$. Then there exists a unique map $\delta_1$ (respectively $\delta_2$) such that $\gamma\circ \delta_1 = \alpha_1$ (respectively $\gamma\circ \delta_2 = \alpha_2$). Clearly, $\delta_1$ and $\delta_2$ are injective. We have:
\[ \gamma\circ \delta_1\circ \beta_1 = \alpha_1\circ \beta_1 = \alpha_2\circ \beta_2 = \gamma\circ \delta_2 \circ \beta_2. \]
Since $\gamma$ is injective, it follows that $\delta_1\circ \beta_1 = \delta_2 \circ \beta_2$. Therefore, it follows from Proposition \ref{combinatorial condition} that $\cC$ (or more precisely, the algebra $A_{\cC}$) is quadratic.

Similarly, one can apply Proposition \ref{combinatorial condition} to show that the categories $\FI_a$, $\OI_a$, $\FS^{\op}$, and $\VI(\mathbb{F})$ are quadratic, where for any field $\mathbb{F}$, the set of objects of $\VI(\mathbb{F})$ is $\Z_+$ and the morphisms $m\to n$ are the injective linear maps $\mathbb{F}^m \to \mathbb{F}^n$.

\begin{remark}
A twisted commutative algebra $E$ is an associative unital graded algebra $E=\bigoplus_{n\geqslant 0} E_n$ where each $E_n$ is equipped with the structure of an $S_n$-module such that the multiplication map $E_n\otimes E_m \to E_{n+m}$ is $S_n\times S_m$-equivariant, and $yx=\tau(xy)$ for every $x\in E_n$, $y\in E_m$ (where $\tau\in S_{n+m}$ switches the first $n$ and last $m$ elements of $\{1,\ldots,n+m\}$); see \cite[\S 8.1.2]{SS-tca}. Any twisted commutative algebra $E$ gives rise to an $R$-linear category $\cA$ with $\Ob(\cA)=\Z_+$ and
\begin{equation*}
\Hom_{\cA} (m,n) = RS_n \otimes_{RS_{n-m}} E_{n-m}.
\end{equation*}
In particular, when $E$ is the twisted commutative algebra with $E_n$ the trivial $S_n$-module for every $n$ (and with the obvious multiplication map), the $R$-linear category $\cA$ we obtained is precisely the category $\cA_{\cC}$ associated to a skeleton $\cC$ of $\FI$. One might ask if the category algebra of every category $\cA$ obtained from a twisted commutative algebra is quadratic. This is not true. For example, let $d$ be any integer $>2$, and let $E$ be the twisted commutative algebra with $E_n$ the trivial $S_n$-module if $n<d$, otherwise let $E_n=0$; the multiplication map of $E$ is defined in the obvious way. Then $E$ gives rise to an $R$-linear category $\cA$ with the property that the ideal of relations of its category algebra is generated in degrees $\leqslant d$, but the category algebra is not quadratic.
\end{remark}

\end{document}